
\documentclass[12pt]{article}
\usepackage{amsmath,amssymb,amsthm}
\usepackage{color}
\usepackage[colorlinks=true, linkcolor=magenta, citecolor=cyan, urlcolor=blue]{hyperref}
%
%
\setlength{\oddsidemargin}{-0.4cm}
\setlength{\topmargin}{-1cm}
\textheight  22.4cm 
\textwidth  15cm 
\font\bb=msbm9
\font\bbi=msbm10 at 12pt
\def\N{\hbox{\bb N}}

\def\R{\hbox{\bb R}}

\def\H{\hbox{\bb H}}

\def\T{\hbox{\bb T}}
\def\Hi{\hbox{\bbi H}}

\def\Ri{\hbox{\bbi R}}
\def\Ni{\hbox{\bbi N}}

\def\Ti{\hbox{\bbi T}}
\newtheorem{theo}{Theorem}[section]
\newtheorem{pro}[theo]{Proposition}
\newtheorem{lem}[theo]{Lemma}

\newtheorem{defin}[theo]{Definition}

\newcommand{\red}{\color{red}\tt }

\title{\bf Log-Sobolev inequalities   for  	
 semi-direct product operators and applications }
\author{
Piero d'Ancona, Patrick Maheux and  Vittoria Pierfelice}

\begin{document}

\maketitle


\tableofcontents

\section{Introduction and Main Theorem}

This paper deals with results of the following type:
assume that an operator ${\cal L}_1$ satisfies
a log-Sobolev inequality with parameter
on a measure space $(X_1,\mu_1)$, and that
another operator ${\cal L}_0$ satisfies a
log-Sobolev inequality (defective  or with parameter) on
a second measure space
$(X_0,\mu_0)$. Then, we
prove a log-Sobolev inequality  (defective or with parameter)
for the 
\emph{semi-direct product operator}
  $$
  {\cal L}:= {\cal L}_0+  N_1^2(x_0){\cal L}_1
  $$
  on the space
 $X_0 \times X_1$ with respect to the measure
 $\mu=\mu_0\times\mu_1$,
for any weight function $N_1: X_0\rightarrow \Ri$ 
whose set of zeroes is negligeable.

More generally, consider $n+1$ measure spaces $(X_i,\mu_i)$,
$i=0,\dots, n$, let $X=X_{0}\times \dots \times X_{n}$ be
their product with the product measure
$\mu=\mu_{0}\otimes \dots \otimes\mu_{n}$,
and write
$\widetilde{\mu}_{i}=\mu_{0}\otimes \dots \otimes\mu_{i-1}$.
Let  
$\Gamma_i$ be a  \emph{carr\'e du champ} on $X_i$
in the sense of Bakry-Emery \cite{bakry},
with domain ${\mathcal D}(\Gamma_i)\subset L^2(\mu_i)$.
Fix $n$ real valued  weight functions
$N_{1},\dots,N_{n}$ defined on $X$ with the property that
$N_{i}$ depends only on the variables $(x_{0},\dots,x_{i-1})$,
nondegenerate in the sense that 
\begin{equation}\label{zerocond}
\widetilde{\mu}_i (\{N_i= 0\})=0, \quad i=1,\dots,n.
\end{equation}
Then we set,
for any $f:X\rightarrow \Ri$, $x=(x_{0},\dots,x_{n})\in X$,
$$
\Gamma f(x_0,x_1,\dots, x_n)=
 \Gamma_0f(\hat{x}_0)(x_0)+
\sum_{i=1}^n N^2_i(x)\Gamma_if(\hat{x}_i)(x_i)
.
$$
Here we are using the notation
$$\Gamma_if(\hat{x}_i)(x_i)$$
to mean that the carr\'e du champ $\Gamma_{i}$ acts only on the
$i$-th coordinate, the others remaining fixed.
The corresponding operator  on $X$ is  defined by
$$
{\cal L}= {\cal L}_0+  \sum_{i=1}^n N^2_i(x)
{\cal L}_i.
$$
We shall call the carr\'e du champ $\Gamma$ the
\emph{semi-direct product} of the champs
$\Gamma_{0},\dots,\Gamma_{n}$, and 
$\mathcal{L}$ the \emph{semi-direct product}
of the operators $\mathcal{L}_{i}$ 
associated with the family $N_i$.

We also recall that a carr\'e du champ $\Gamma_{0}$ is said to
satisfy the \emph{diffusion property}, or that
it is of \emph{diffusion type}, if, for all
functions $\phi$, $\chi$, $\psi$ in an algebra of functions 
${\mathcal A}$ which is dense in
its domain ${\mathcal D}(\Gamma_{0})$, the identity
\begin{equation*}
  \Gamma_{0}(\phi\chi,\psi)=
  \phi \Gamma_{0}(\chi,\psi)+\chi \Gamma_{0}(\phi,\psi)
\end{equation*}
is satisfied \cite{bakry}. 
\\

Our main goal is to prove that if each $\Gamma_{i}$
(resp.~$\mathcal{L}_{i}$) for $i=0,\dots, n-1$ is
of  diffusion type and
satisfies a log-Sobolev estimate, then
their semi-direct product also satisfies
suitable generalized log-Sobolev estimates.

We recall  the relevant  notions.
We say that $ (X_i,\mu_i, {\cal L}_i)$ satisfies 
a \emph{log-Sobolev inequality with parameter} (or super log-Sobolev inequality) if there exists a continuous, non increasing function
 $M_i : (0,+\infty)\rightarrow \Ri$ such that
\begin{equation}\label{lsip}
(H_i)\quad \quad \forall h\in {\mathcal D}({\Gamma}_i), \forall t>0,\quad 
\int_{X_i} h^2\ln \frac{h^2}{\vert\vert h\vert\vert_2^2} \,
d\mu_i
\leq
t\int_{X_i}\Gamma_i(h)\, d\mu_i
+M_i(t)\, \vert\vert h\vert\vert_2^2
\end{equation}
with $\vert\vert h\vert\vert_2^2 :=\vert\vert h\vert\vert_{L^2(\mu_i)}^2$.
Recall that the relation between the carr\'e $\Gamma_{i}$
and the associated operator $\mathcal{L}_{i}$ is expressed by
$$
\int_{X_i}\Gamma_i(h)\, d\mu_i=
\int_{X_i}{\cal L}_i h. h\, d\mu_i
$$
 on the domain  
 ${\mathcal D}({\cal L}_i)\subset {\mathcal D}({\Gamma}_i)$.

We shall say that  $ (X_0,\mu_0, {\cal L}_0)$ satisfies a defective Gross inequality (or a Gross inequality if $b=0$, see below) if  there exists $a, b>0$  
 such that
\begin{equation}\label{defgross}
(H'_0)\quad \quad \forall h\in {\mathcal D}({\cal L}_0), \quad 
\int_{X_0} h^2\ln \frac{h^2}{\vert\vert h\vert\vert_{L^{2}(\mu_{0})}^2} \,
d\mu_0
\leq
a\int_{X_0}\Gamma_0(h)\, d\mu_0
+b\, \vert\vert h\vert\vert_{L^{2}(\mu_{0})}^2.
\end{equation}
We collect some examples of such inequalities in Section \ref{examp}.

The main result of the paper is the following 

\begin{theo}\label{main}
Assume $X_{i},\mu_{i},\Gamma_{i},\mathcal{L}_{i},N_{i}$ 
as above satisfy the conditions $(H_{i})$, $i=0,\dots,n$.
Moreover, assume that 
$\Gamma_{0},\dots,\Gamma_{n-1}$ have the diffusion property.
Then we have:

 \begin{enumerate}
 \item
For any multiparameter
$t=(t_0,\dots,t_n)$ with $t_i>0$, $i=0,\dots,n$,
the following inequality holds
\begin{equation}\label{result1}
\int_{X} h^2\ln \frac{h^2}{\vert\vert h\vert\vert_{L^{2}(\mu)}^2}\,
d\mu
\leq
\int_{X}\Gamma^{(t)}(h)\, d\mu
+\int_X \left[ M_0(t_0)+ N(t, x) \right] h^2(x)\, d\mu(x)
\end{equation}
where 
$\displaystyle{\Gamma}^{(t)}= t_0{\Gamma}_0+  \sum_{i=1}^n t_i N^2_i{\Gamma}_i$
,
$\displaystyle
N(t,x)=\sum_{i=1}^n M_i(t_iN^2_i(x))$.
\item
Assume that $(H'_{0})$ holds in place of $(H_{0})$.
Then the following inequality holds
\begin{equation}\label{result2}
\int_{X} h^2\ln \frac{h^2}{\vert\vert h\vert\vert_{L^{2}(\mu)}^2}\,
d\mu
\leq
\int_{X}{\widetilde{\Gamma}}^{(t)}(h)\, d\mu
+\int_X {\widetilde{W}}(t,x)h^2(x)\, d\mu(x)
\end{equation}
where
$\displaystyle
{\widetilde{\Gamma}}^{(t)}=
a{\Gamma}_0+  \sum_{i=1}^n t_i N^2_i{\Gamma}_i $,
${\widetilde{W}}(t,x)=b+ \sum_{i=1}^n M_i(t_iN^2_i(x))$.
\item
Let $t=(s,\dots,s)\in \Ri^{n+1}$, $s>0$, and  $\Gamma^{(t)}(h)= s\Gamma(h)$ with
$\displaystyle
{\Gamma}= {\Gamma}_0+  \sum_{i=1}^n  N^2_i{\Gamma}_i$.
Then, writing
$\displaystyle
\widetilde{N}(s,x)=\sum_{i=1}^n M_i(sN^2_i(x))$,
the following inequality holds
\begin{equation}\label{result2bis}
\int_{X} h^2\ln \frac{h^2}{\vert\vert h\vert\vert_{L^{2}(\mu)}^2}\,
d\mu
\leq
s\int_{X}\Gamma(h)\, d\mu
+\int_X \left[ M_0(s)+ \widetilde{N}(s, x) \right] h^2(x)\, d\mu(x).
\end{equation}
\end{enumerate}
\end{theo}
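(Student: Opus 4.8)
\emph{Overall approach.} I would argue by induction on $n$, with the subadditivity (tensorization) of entropy as the engine and the diffusion chain rule as the tool that lets the one–dimensional hypotheses $(H_i)$ survive the tensorization. Throughout write $\mathrm{Ent}_\mu(g)=\int_X g\ln\!\big(g/\!\int_X g\,d\mu\big)\,d\mu$ for $g\ge 0$, so that the left-hand sides of \eqref{result1}, \eqref{result2}, \eqref{result2bis} are all $\mathrm{Ent}_\mu(h^2)$. For $n=0$ statement (1) is exactly $(H_0)$ and statement (2) is exactly $(H'_0)$, so the base case is trivial; statement (3) is merely the specialization $t=(s,\dots,s)$ of (1), for which $\Gamma^{(t)}=s\Gamma$, $N(t,x)=\widetilde N(s,x)$ and $M_0(t_0)=M_0(s)$, so it needs no separate argument. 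Hence it suffices to carry out the inductive step for (1); the step for (2) is identical, with $(H'_0)$ and the constants $a,b$ playing at the bottom coordinate the role that $(H_0)$ and $M_0(t_0)$ play for (1).

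\emph{Inductive step, splitting off the top coordinate.} Put $Y=X_0\times\cdots\times X_{n-1}$ and $\nu=\mu_0\otimes\cdots\otimes\mu_{n-1}=\widetilde\mu_n$, so $X=Y\times X_n$, $\mu=\nu\otimes\mu_n$. For $h$ in the relevant diffusion algebra set $F(y)=\|h(y,\cdot)\|_{L^2(\mu_n)}^2=\int_{X_n}h^2\,d\mu_n$. Subadditivity of entropy for a product measure gives
\[
\mathrm{Ent}_\mu(h^2)\ \le\ \int_Y \mathrm{Ent}_{\mu_n}\!\big(h^2(y,\cdot)\big)\,d\nu(y)\ +\ \mathrm{Ent}_\nu(F).
\]
For fixed $y=(x_0,\dots,x_{n-1})$ the quantity $N_n^2(x)=N_n^2(y)$ is constant in $x_n$, and by \eqref{zerocond} we have $\nu(\{N_n=0\})=0$, so for $\nu$-a.e.\ $y$ we may apply $(H_n)$ with parameter $t_nN_n^2(y)>0$ and integrate in $\nu$; this bounds $\int_Y\mathrm{Ent}_{\mu_n}(h^2(y,\cdot))\,d\nu$ by $t_n\int_X N_n^2\,\Gamma_n(h)\,d\mu+\int_X M_n(t_nN_n^2)\,h^2\,d\mu$, i.e.\ the pieces of \eqref{result1} carrying the index $n$.

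\emph{The marginal term.} To the remaining term $\mathrm{Ent}_\nu(F)$ I apply the induction hypothesis on $(Y,\nu)$ with operators $\mathcal L_0,\dots,\mathcal L_{n-1}$, weights $N_1,\dots,N_{n-1}$ (still functions of the $Y$–coordinates only), and multiparameter $(t_0,\dots,t_{n-1})$, to the function $g=\sqrt F$. Since $g^2=F$ and the zeroth-order weights do not depend on $x_n$, the zeroth-order contribution becomes exactly $\int_X\big[M_0(t_0)+\sum_{i=1}^{n-1}M_i(t_iN_i^2)\big]h^2\,d\mu$, while the gradient contribution is $\sum_{i=0}^{n-1}t_i\int_Y N_i^2\,\Gamma_i(\sqrt F)\,d\nu$ (with $N_0\equiv1$). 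So everything reduces to the estimate $\int_Y N_i^2\,\Gamma_i(\sqrt F)\,d\nu\le\int_X N_i^2\,\Gamma_i(h)\,d\mu$ for $i=0,\dots,n-1$, which is where the diffusion property of $\Gamma_0,\dots,\Gamma_{n-1}$ is used.

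\emph{The key estimate and the main obstacle.} By the diffusion (chain rule) property, $\Gamma_i(\sqrt F)=\Gamma_i(F)/(4F)$ and $\Gamma_i(h^2)=4h^2\Gamma_i(h)$. Using the generalized Minkowski inequality for the seminorm $u\mapsto\sqrt{\Gamma_i(u)}$ to pull $\Gamma_i$ inside the $\mu_n$–integral defining $F$, and then Cauchy–Schwarz in $L^2(\mu_n)$,
\[
\Gamma_i(\sqrt F)=\frac{1}{4F}\,\Gamma_i\!\Big(\int_{X_n}h^2\,d\mu_n\Big)
\le\frac{1}{4F}\Big(\int_{X_n}\!\sqrt{\Gamma_i(h^2)}\,d\mu_n\Big)^{\!2}
=\frac{1}{4F}\Big(\int_{X_n}\!2|h|\sqrt{\Gamma_i(h)}\,d\mu_n\Big)^{\!2}
\le\int_{X_n}\Gamma_i(h)\,d\mu_n .
\]
Since $N_i$ ($i\le n-1$) is constant in $x_n$, multiplying by $N_i^2$ and integrating in $\nu$ yields the desired bound, and collecting all terms reproduces the right-hand side of \eqref{result1}, closing the induction. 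I expect the main difficulty to be the rigorous justification of this last chain — namely the exchange of $\Gamma_i$ with $\int d\mu_n$ and the membership $\sqrt F\in\mathcal D(\Gamma_i)$ — which should be handled by first restricting to the dense diffusion algebra $\mathcal A$ where the chain rule and these manipulations are licit, and then passing to the limit in $\mathcal D(\Gamma_i)$; the bookkeeping of which coordinates carry a weight and the harmless null set $\{N_n=0\}$ are the only other points requiring attention.
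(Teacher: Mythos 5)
Your proposal is correct and follows essentially the same route as the paper: the same conditional-entropy decomposition peeling off the top coordinate (the paper runs this as an induction on $k$ via the statements $(R_k)$ rather than on $n$, but the content is identical), the same choice of parameter $s=t_nN_n^2(y)$ in $(H_n)$, and the same key estimate $\int\Gamma_i(\sqrt F)\,d\nu\le\int\Gamma_i(h)\,d\mu$ for the marginal, which the paper isolates as Lemma \ref{basis} and proves by the same diffusion-plus-Cauchy--Schwarz computation that you package as Minkowski's inequality for the seminorm $\sqrt{\Gamma_i}$. Your handling of the null set $\{N_n=0\}$ and of Statements 2 and 3 also matches the paper's treatment.
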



We   notice  that the operator ${\cal L}_0$ 
plays a special role in the previous result, indeed
we can assume that  ${\cal L}_0$  satisfies 
either a super log-Sobolev inequality (Statement 1)
or a defective Gross inequality (Statement 2). 
On the other hand, for the remaining operators ${\cal L}_i$
($i=1,\dots,n$)
we are not able to replace the super log-Sobolev inequality with
a defective Gross inequality, due to the way we use
the parameters in the course of the proof.
Note that if all the  $\Gamma_{i}$'s, $i=0,\dots,n$, are of diffusion type then $\Gamma^{(t)}$ is also of diffusion type

The proof of Theorem \ref{main}
is largely inspired
by two papers: \cite{cgl} for the method of proof,
and \cite{bcl} for the proper formulation of the
assumptions. 
 Recall  that it is well-known that the logarithmic Sobolev inequality  is stable 
 under the usual direct  product of spaces 
 endowed with  probability measures (the so-called tensorization method), see for instance \cite{as}. 
 Our  Theorem \ref{main}  generalizes   this situation.
 The bulk of the proof  is contained in
Section \ref{proof} of the paper, while the following
sections are devoted to several examples and applications
of our theory.
Here, for the convenience of the reader, we
would like to illustrate the essential points of the proof in
the very special case of the Grushin operator,
which can be regarded as a semidirect product in the
sense introduced above.

Thus, assume $X_{i}= \mathbb{R}_i := \mathbb{R}$ ($i=0,1$)
with the usual Lebesgue measure which we denote by $\mu_{0}=\mu_{1}$,
so that $X=\mathbb{R}^{2}$ with the 2D Lebesgue measure
$\mu=\mu_{0}\otimes\mu_{1}$.
Consider for $\alpha>0$ the operator
$$
{\cal L} f(x_0,x_1)=
\left(\frac{\partial}{\partial{x_0}}\right)^2f(x_0,x_1)+
|x_0|^{2 \alpha}\left(\frac{\partial}{\partial{x_1}}\right)^2 f(x_0,x_1)
$$ 
with
$\frac{\partial }{\partial{x_i}}$ the usual partial derivatives.
This operator is usually called the \emph{Grushin operator} (but see
Section \ref{appli} for more general operators with the
same structure).
We note that 
${\cal L}_0=(\frac{\partial}{\partial{x_0}})^2$  and 
${\cal L}_1=(\frac{\partial}{\partial{x_1}})^2$ are standard 
1D Laplacians,  and hence
${\cal L}$ is not the usual product operator on 
$\Ri^2$. However $\mathcal{L}$ can be regarded as the
semi-direct product of $\mathcal{L}_{0}$, $\mathcal{L}_{1}$
with function $N_1(x_0)=|x_0|^{\alpha} $,
in the sense defined above.
We can write the standard super log-Sobolev inequality 
on $\mathbb{R}$ (see \cite{c}, \cite{bcl}) in the form
\begin{equation}\label{lsiR}
 \forall h\in {\mathcal D}({\cal L}_i), \forall s>0 \quad 
\int_{\R_i} h^2\ln \frac{h^2}{\vert\vert h\vert\vert_{L^{2}(\mu_{i})}^2} \,
d\mu_i
\leq
s\int_{\R_i}\Gamma_i(h)\, d\mu_i
+M_i(s) \vert\vert h\vert\vert_2^2
\end{equation}
with $M_{i}(s)= -\frac{1}{2} \ln(e^2\pi s)$,
$
\Gamma_i(h)=
\vert\frac{\partial h}{\partial{x_i}}\vert^2
$
and
$\vert\vert h\vert\vert_2^2=\int_{\R_i} h^2(x_i)\,d\mu_i(x_i)$;
this  is equivalent to  the classical Gross inequality with Gaussian measure
(\cite{c},\cite{bcl} and Section \ref{examp} below).
In particular, we see that
our assumptions $(H_0)$ and $(H_1)$ are satisfied.  

We would like to prove \eqref{result1} in this special
situation. To this end, let
$f\in C_0^1(\Ri^2)$, let $t_0,t_1>0$,
fix $x_0\in \Ri_0$ and  apply \eqref{lsiR} 
to the function $x_1\mapsto f(x_0,x_1)$ defined on $X_1$. 
We obtain, for all $s>0$, 
$$
\int_{\R_1} f^2\ln f^2\,
d\mu_1
\leq
s\int_{\R_1}\Gamma_1(f)\, d\mu_1
+M_1(s) 
\int_{\R_1} f^2(x_0,x_1)\,d\mu_1(x_1)
+ 
\vert\vert  f\vert\vert^2_{L^2(\mu_1)}\ln 
 \vert\vert f  \vert\vert^2_{L^2(\mu_1)}.
$$
For $x_0\neq 0$ we can choose $s=t_1N_1^2(x_0)$
and integrate with respect to $\mu_0$, obtaining
\begin{equation}\label{lsiR0}
\int_{\R^2} f^2\ln f^2\,
d\mu
\leq
t_1\int_{\R^2} N_1^2\,  \Gamma_1(f)\, d\mu
\,+\,
\int_{\R^2}M_1(t_1N_1^2) 
 f^2\,d\mu 
\, +  
\int_{\R_0} h^2(x_0)\ln   h^2(x_0) \,d\mu_0(x_0) 
\end{equation}
where $h(x_0)=\left( \int_{\R_1} f^2(x_0,x_1)\,d\mu_1(x_1)\right)^{1/2}$.
Now we can apply \eqref{lsiR} to the function $h$ and
this gives
\begin{equation}\label{ine1}
\int_{\R_0} h^2(x_0)\ln   h^2(x_0) \,d\mu_0(x_0) 
\leq
s\int_{\R_0}\Gamma_0(h)\, d\mu_0
+M_0(s) 
\int_{\R_0} h^2\,d\mu_0
\,+\, \vert\vert  h\vert\vert^2_{L^2(\mu_0)}\ln 
 \vert\vert h  \vert\vert^2_{L^2(\mu_0)}.
\end{equation}
The last term coincides with
$$
\vert\vert  f\vert\vert^2_{L^2(\mu)}\ln 
 \vert\vert f  \vert\vert^2_{L^2(\mu)}
 $$
 while 
 $\int_{\R_0} h^2(x_0)\,d\mu_0(x_0)\equiv\vert\vert f  \vert\vert^2_{L^2(\mu)}$.
 Now consider the term
  $$
  \int_{\R_0}\Gamma_0(h)\, d\mu_0=
   \int_{\R_0}
   \left|\frac{\partial h}{\partial{x_0}}\right|^2
   d\mu_0,
   $$
by the Cauchy-Schwarz inequality we have immediately
$$
  \left| \frac{\partial h}{\partial{x_0}}\right| 
  =\left| \frac{1}{2h}\int_{X_1} 2f
      \frac{\partial f}{\partial{x_0}}   d\mu_1\right|
      \leq
      \frac{1}{h}
      \left(
  \int_{X_1} f^2\, d\mu_1\right)^{1/2}
  \left( \int_{X_1} 
    \left|  \frac{\partial f}{\partial{x_0}} \right|^2  d\mu_1
  \right)^{1/2}
  $$
  $$
   =\left(\int_{X_1} 
    \left|  \frac{\partial f}{\partial{x_0}} \right|^2  d\mu_1
    \right)^{1/2}
$$
and hence
      $$
       \int_{\R_0}\Gamma_0(h)\, d\mu_0
      \leq
      \int_{\R^2} 
    \left|  \frac{\partial f}{\partial{x_0}} \right|^2  d\mu
    =
     \int_{\R^2}\Gamma_0(f)\, d\mu.
    $$
Coming back to (\ref{lsiR0}), 
with  $s=t_0$ in (\ref{ine1}), we get
  $$
\quad 
\int_{\R^2} f^2\ln f^2\,
d\mu
\leq
t_0\int_{\R^2}   \Gamma_0(f)\, d\mu
\,+\,
t_1\int_{\R^2} N_1^2\,  \Gamma_1(f)\, d\mu
\,+\,
M_0(t_0) 
\int_{\R^2} f^2\,d\mu 
\, +  \,
$$
$$
\int_{\R^2} M_1(t_1N_1^2) 
 f^2\,d\mu 
\,+\,
\vert\vert  f\vert\vert^2_{L^2(\mu)}\ln 
 \vert\vert f  \vert\vert^2_{L^2(\mu)}.
$$
Now writing $t=(t_0,t_1)$, 
${\Gamma}^{(t)}= t_0{\Gamma}_0+    t_1 N^2_1{\Gamma}_1$
and
$
\displaystyle
N(t,x)= M_0(t_0)+M_1(t_1N_1^2(x))$,
the last inequality can be written as
  $$
\quad 
\int_{\R^2} f^2\ln f^2\,
d\mu
\leq
 \int_{\R^2}   \Gamma^{(t)}(f)\, d\mu
\,+\,
\int_{\R^2} N(t,x)f^2(x)\,d\mu(x) 
\,+\,
\vert\vert  f\vert\vert^2_{L^2(\mu)}\ln 
 \vert\vert f  \vert\vert^2_{L^2(\mu)}
$$
which is exactly \eqref{result1} for our choice
of operators and spaces.

The plan of the paper is the following.
Section \ref{proof} is devoted to the proof of the
main result Theorem \ref{main}.
In Section \ref{examp} we collect a few examples
of super log-Sobolev inequalities arising in different
contexts, from geometry, mathematical physics
and the general theory of PDEs.
These examples are further discussed in Section \ref{appli}
where we apply our main result to obtain
explicit inequalities in several specific cases.
Finally, Section \ref{ultrabound} contains some
ultracontractive (i.e. heat kernel) bounds
which can be deduced in some
special cases from our inequalities.
The paper is concluded with
a technical Appendix where we prove 
two Hardy type inequalities in the spirit of the assumption
of Rosen's  lemma \cite[Eq.4.4.2]{d}, which are necessary for
the proof of the ultracontractive bounds of
Section  \ref{ultrabound}.

\section{Proof of the Main Theorem}\label{proof}

We begin by recalling our notations.
Let $(X_i,\mu_i)$ for $i=0,\dots,n$ be  measure spaces, $X=X_0\times \dots\times X_n$  their product and
 $\mu=\mu_0\otimes \dots\otimes\mu_n$ 
 the product measure on $X$. We also denote by
 $\widetilde{\mu}_{i}=\mu_{0}\otimes \dots \otimes\mu_{i-1}$,
 ${\overline {\mu}}_i=\mu_i\otimes \dots \otimes\mu_n$,
  ${\overline {X}}_{i}=X_i\times \dots \times X_n$, 
${\widetilde{\mu}}_{i,j}=\mu_i\otimes\dots \otimes\mu_j$ and 
${\widetilde X}_{i,j}=X_i\times \dots \times X_j$   for   $0\leq i,j\leq n$.

We denote by $\Gamma_i$ a \emph{carr\'e du champ}
defined on each $X_i$,
with domain ${\mathcal D}(\Gamma_i)\subset L^2(\mu_i)$
(see \cite{bakry}); moreover, we fix $n$ real valued 
weight functions
$N_{1},\dots,N_{n}$ on $X$ with the property that
$N_{i}$ depends only on the variables $(x_{0},\dots,x_{i-1})$,
nondegenerate in the sense that 
$\widetilde{\mu}_i (\{N_i= 0\})=0$ for all $i=1,\dots,n$.
Then we set,
for any $f:X\rightarrow \Ri$,
$$
\Gamma f(x_0,x_1,\dots, x_n)=
 \Gamma_0f(\hat{x}_0)(x_0)+
\sum_{i=1}^n N^2_i(x)\Gamma_if(\hat{x}_i)(x_i)
.
$$
Here we are using the notation
$$\Gamma_if(\hat{x}_i)(x_i)$$
to mean that the carr\'e du champ $\Gamma_{i}$ acts only on the
$i$-th coordinate, the others remaining fixed.
For brevity we shall write simply
$$
\Gamma f=
 \Gamma_0f+
\sum_{i=1}^n N^2_i\Gamma_i f
.
$$
In the same way, the corresponding operator  on $X$ is  defined by
$$
{\cal L}f= {\cal L}_0f+  \sum_{i=1}^n N^2_i
{\cal L}_if
$$
(where as before 
${\cal L}_i$ acts only on the variable $x_i$ 
while the others remain fixed).

We now state a useful lemma, 
which extends an analogous result proved in
\cite[p.99]{cgl}  for sums of squares of vector fields or 
second order differential operators without constant term.

 \begin{lem}\label{basis}
Given $f\in {\cal D}({\cal L})$,
denote by $h_{n-k}$, $k=1,\dots,n$ the functions
 \begin{equation*}
   h_{n-k}(x_0,\dots,x_{n-k})^{2}=
   \int_{ {\overline {X}}_{n-k+1}} f(x_0,x_1,\dots, x_n)^2\,d
   {\overline {\mu}}_{n-k+1}.
 \end{equation*}
 Assume that $\Gamma_{0},\dots,\Gamma_{n-1}$ are carr\'e du champ with the   diffusion property.
 Then we have
 $$
 \int_{X_{n-k}}
 \Gamma_{n-k}(h_{n-k})\, d\mu_{n-k}
 \leq
 \int_{{\overline{X}}_{n-k}}
 \Gamma_{n-k}(f)\, d{\overline{\mu}}_{n-k}.
 $$
 (where both sides are functions of the variables 
 $(x_0,x_1, \dots ,x_{n-k-1})$ only).
 \end{lem}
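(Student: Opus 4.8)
The plan is to reduce everything to the single-variable estimate, then exploit the diffusion property to pass the carré du champ $\Gamma_{n-k}$ through the square root and the integration in the remaining variables $x_{n-k+1},\dots,x_n$. Fix the variables $(x_0,\dots,x_{n-k-1})$ throughout; all quantities below are functions of these. Write $g = h_{n-k}$, so that $g(x_{n-k})^2 = \int_{\overline X_{n-k+1}} f^2\, d\overline\mu_{n-k+1}$, and note $g\ge 0$. Since $\Gamma_{n-k}$ acts only on $x_{n-k}$, I want to compare $\Gamma_{n-k}(g)$ with the $\overline\mu_{n-k+1}$-average of $\Gamma_{n-k}(f)$.

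The key pointwise step is the identity, valid wherever $g>0$,
$$
\Gamma_{n-k}(g) = \Gamma_{n-k}\Big(\sqrt{{\textstyle\int} f^2\, d\overline\mu_{n-k+1}}\Big)
= \frac{1}{4 g^{2}}\,\Gamma_{n-k}\Big({\textstyle\int} f^2\, d\overline\mu_{n-k+1}\Big),
$$
obtained from the chain rule for diffusion carrés du champ applied to $u\mapsto\sqrt u$; next, using the diffusion (Leibniz) property $\Gamma_{n-k}(\phi\chi,\psi)=\phi\Gamma_{n-k}(\chi,\psi)+\chi\Gamma_{n-k}(\phi,\psi)$ together with differentiation under the integral sign, one gets
$$
\Gamma_{n-k}\Big({\textstyle\int} f^2\, d\overline\mu_{n-k+1}\Big)
= 2\int f\, \Gamma_{n-k}(f, \cdot)\, d\overline\mu_{n-k+1}\ \text{paired against itself},
$$
more precisely $\Gamma_{n-k}\big(\int f^2 d\overline\mu_{n-k+1}\big) = \int\int \Gamma_{n-k}(f(\cdot,y),f(\cdot,y'))\,\langle\text{as a bilinear form}\rangle$, which after the Cauchy--Schwarz inequality for the nonnegative bilinear form $\Gamma_{n-k}(\cdot,\cdot)$ is bounded by $\big(\int f^2 d\overline\mu_{n-k+1}\big)\big(\int \Gamma_{n-k}(f)\, d\overline\mu_{n-k+1}\big) = g^2 \int \Gamma_{n-k}(f)\, d\overline\mu_{n-k+1}$. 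Combining the two displays yields the pointwise bound
$$
\Gamma_{n-k}(g) \le \frac14\cdot\frac{4 g^2}{g^2}\int \Gamma_{n-k}(f)\, d\overline\mu_{n-k+1}
= \int_{\overline X_{n-k+1}}\Gamma_{n-k}(f)\, d\overline\mu_{n-k+1}
$$
on $\{g>0\}$. One then integrates in $x_{n-k}$ over $X_{n-k}$ and applies Fubini on the right-hand side to recover exactly $\int_{\overline X_{n-k}}\Gamma_{n-k}(f)\, d\overline\mu_{n-k}$; on $\{g=0\}$ both sides vanish, so the set where $g=0$ contributes nothing.

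The main obstacle is making the formal manipulations rigorous: differentiating $\int f^2\, d\overline\mu_{n-k+1}$ under the integral sign, justifying the chain rule for $\sqrt{\cdot}$ at points where $g$ is small or zero, and the Cauchy--Schwarz step for the $\Gamma_{n-k}$-bilinear form indexed by the auxiliary variables. I would handle this by working first with $f$ in the dense algebra $\mathcal A$ (where $\Gamma_{n-k}$ and all products are well defined, and the diffusion identity literally holds), establishing the inequality there, and then passing to general $f\in\mathcal D(\mathcal L)$ by density and the closability of the relevant forms; the degeneracy at $\{g=0\}$ is dispatched by replacing $g$ with $\sqrt{g^2+\varepsilon}$, running the argument, and letting $\varepsilon\to 0$, using that $\Gamma_{n-k}$ is local and the extra $\varepsilon$ only improves the denominator. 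This mirrors the argument of \cite[p.99]{cgl} for sums of squares of vector fields, the only new ingredient being that the diffusion property is exactly what licenses the chain rule and Leibniz steps in the abstract carré du champ setting.
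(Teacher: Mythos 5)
Your proof is correct and follows essentially the same route as the paper's: both arguments rest on the diffusion identity $\Gamma_{n-k}(h^2)=4h^2\Gamma_{n-k}(h)$ combined with the Cauchy--Schwarz inequality for the bilinear form $\Gamma_{n-k}$ to obtain the pointwise bound $\Gamma_{n-k}(h_{n-k})\le\int_{\overline{X}_{n-k+1}}\Gamma_{n-k}(f)\,d\overline{\mu}_{n-k+1}$, which is then integrated in $x_{n-k}$. The only differences are organizational: you symmetrize the Cauchy--Schwarz step into a double integral over the auxiliary variables where the paper pairs $\Gamma_{n-k}(h^2,f^2)$ and applies Cauchy--Schwarz first in $L^2$ and then for the bilinear form, and you are more explicit about the regularization on $\{h_{n-k}=0\}$ and the density passage, which the paper leaves implicit.
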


{\bf Proof}.
We shall write the details of the proof in the
case $n=1$ only; the general case is completely
analogous.

We recall the relation between the carr\'e du champ $\Gamma_{0}$
and the corresponding generator ${\mathcal L}_{0}$:
one introduces the bilinear form
\begin{equation*}
  \Gamma_{0}(\phi,\psi)=\frac12
  \left[{\mathcal L}_{0}(\phi \psi)-\phi {\mathcal L}_{0}(\psi)-\psi {\mathcal L}_{0}(\phi)\right]
\end{equation*}
and then the relation is given by
\begin{equation*}
  \Gamma_{0}(\phi)=\Gamma_{0}(\phi,\phi).
\end{equation*}
We must prove the inequality
\begin{equation}\label{eq:but}
  \int_{X_{0}}  \Gamma_{0}(h)d\mu_{0}\le
  \int_{X_{0}\times X_{1}}\Gamma_{0}(f)d\mu_{0}d\mu_{1}
\end{equation}
where $h$ and $f$ are related by
\begin{equation*}
  h(x_{0})=\left( \int_{X_{1}}f(x_{0},x_{1})^{2}d\mu_{1}(x_{1}) \right)^{1/2}.
\end{equation*}
We can write
\begin{equation*}
  \Gamma_{0}(h^{2})=
  \Gamma_{0}(h^{2},h^{2})=
  \Gamma_{0}(h^{2},{\textstyle\int} f^{2}d\mu_{1})=
  \int \Gamma_{0}(h^{2},f^{2})d\mu_{1}
\end{equation*}
by linearity. Then using the diffusion property
\begin{equation*}
  \Gamma_{0}(\phi\chi,\psi)=
  \phi \Gamma_{0}(\chi,\psi)+\chi \Gamma_{0}(\phi,\psi)
\end{equation*}
and by Cauchy-Schwarz  inequality in $L^2$, we have
\begin{equation*}
  \Gamma_{0}(h^{2})=
  2\int f \Gamma_{0}(h^{2},f)d\mu_{1}\le
  2 h\left(\int \Gamma_{0}(h^{2},f)^{2}d\mu_{1}\right)^{1/2}.
\end{equation*}
Using now the Cauchy-Schwarz inequality  
for   the bilinear  form $\Gamma_{0}$,
we deduce
\begin{equation*}
  \le 2h\left(\int \Gamma_{0}(h^{2})\Gamma_{0}(f)d\mu_{1}\right)^{1/2}=
  2h\Gamma_{0}(h^{2})^{1/2}\left(\int\Gamma_{0}(f)d\mu_{1}\right)^{1/2}.
\end{equation*}
Thus we have proved the inequality
\begin{equation}\label{eq:ineqbut}
  \Gamma_{0}(h^{2})\le 4 h^{2}\int \Gamma_{0}(f)d\mu_{1}.
\end{equation}
Now we notice that, again by the diffusion property,
\begin{equation*}
  \Gamma_{0}(h^{2})=4h^{2}\Gamma_{0}(h)
\end{equation*}
and together with \eqref{eq:ineqbut} this implies \eqref{eq:but}.
\\

We fix $n\ge1$ and prove the main theorem by induction on 
$k=1,\dots,n$.
Let   $t_i>0$ for $i=0, \dots,n$; we must prove
the inequality 
$$
\int_{\overline{X}_{n-k}} f^2\ln  f^2 \,d{\overline{\mu}}_{n-k}
\leq
 \int_{\overline{X}_{n-k}} \sum_{i=n-k+1}^n t_i N_i^2\,\Gamma_i(f)
 \,d{\overline{\mu}}_{n-k} 
 \leqno{(R_{k})}
$$
$$
\kern10em
  +\int_{\overline{X}_{n-k}}  \sum_{i=n-k+1}^n M_i(t_i N_i^2)f^2
  \,d{\overline{\mu}}_{n-k}\,+\,
   \int_{X_{n-k}} h_{n-k}^2\ln h_{n-k}^2  \,d{\mu}_{n-k}.
$$
Notice that both sides depend only on the variables
$(x_0,x_1,\dots,x_{n-k-1})$; here and in the following,
an integral like 
$\int_{\overline{X}_{n-k}} f  \,d{\overline{\mu}_{n-k}}$
denotes integration of the function $f(x_0,x_1,\dots, x_n)$ 
with respect to the set of variables 
$(x_{n-k}, x_{n-k+1},\dots,x_n)$
in the  measure  $\,d{\overline{\mu}_{n-k}}$.

{\em Step 1}:
We start by proving $(R_1)$ i.e.
$$
\int_{X_{n-1}\times X_n} f^2\ln  f^2 \,d\mu_{n-1}d\mu_{n}
\leq
 \int_{X_{n-1}\times X_n}  t_n N_n^2\,\Gamma_n(f)
 \,d\mu_{n-1}d\mu_{n}\,+\,
 $$
 $$
  \int_{X_{n-1}\times X_n}  M_n(t_n N_n^2)f^2
  \,d\mu_{n-1}d\mu_{n}\,+\,
   \int_{X_{n-1}}h_{n-1}^2\ln h_{n-1}^2   \,d\mu_{n-1}.
$$
In order to prove this,
we apply $(H_n)$ to the function
$x_n\in X_n \rightarrow f(x_0,x_1, \dots , x_n)$. 
Recalling the definition of $h_{n-1}$ above, we obtain for any $s>0$
$$
\int_{ X_n} f^2\ln  f^2 \,d\mu_{n}
\leq
s \int_{ X_n}  \Gamma_n(f)
 \,d\mu_{n}\,+\,
  \int_{ X_n}  M_n(s)f^2
  \,d\mu_{n}\,+\,
h_{n-1}^2\ln h_{n-1}^2.
$$
We can choose now $s=t_nN^2_n(x)+\epsilon$, $\epsilon>0$
(recall that $N_n$ does not depend on $x_n$) 
and integrate w.r.to $\mu_{n-1}$; since
$M_{n}(s)$ is non increasing, by letting $\epsilon\to0$
we obtain $(R_{1})$.
\\

{\em Step 2}: 
We now assume that $(R_{k})$ is true for some
$k$, $1\le k\le n-1$, and deduce $(R_{k+1})$.
First of all, we deal with the last term of the inequality $
(R_k)$ i.e.
$$
   \int_{X_{n-k}} h_{n-k}^2\ln h_{n-k}^2  \,d{\mu}_{n-k}.
$$
If we apply assumption $(H_{n-k})$ to the function
$x_{n-k}\in X_{n-k} 
\rightarrow  h_{n-k}(x_0,x_1,\dots, x_{n-k})$,
we obtain, for any $s>0$,
$$
\int_{ X_{n-k}} h_{n-k}^2\ln  h_{n-k}^2 \,d\mu_{n-k}
\leq
s \int_{ X_{n-k}}  \Gamma_{n-k}(h_{n-k})
 \,d\mu_{n-k}\,+\,
  \int_{ X_{n-k}}  M_{n-k}(s)
  h_{n-k}^2
  \,d\mu_{n-k}
  $$
  $$
  \,+\;
h_{n-k-1}^2\ln h_{n-k-1}^2.
$$
Indeed,
$h_{n-k-1}^2=  \int_{ X_{n-k}}  h_{n-k}^2  \,d\mu_{n-k}$.
Then we choose $s= t_{n-k}N^2_{n-k}(x)+\epsilon$,
$\epsilon>0$,
which is possible since
$N_{n-k}$ does not  depend of the variables 
$(x_{n-k},\dots, x_n)$. Applying 
Lemma \ref{basis}  to  the first term 
at the right hand side
of the previous inequality, integrating the inequality  
w.r.to
$\,d\mu_{n-k-1}$ over $X_{n-k-1}$, and letting $\epsilon\to0$
as above, we obtain
\begin{equation}\label{stepint}
\int_{ {\widetilde X}_{n-k-1,n-k}} h_{n-k}^2\ln  h_{n-k}^2 \,\,d{\widetilde{\mu}}_{n-k-1,n-k}
\leq
 \int_{ {\overline X}_{n-k-1}}  
 t_{n-k}N^2_{n-k} \Gamma_{n-k}(f)
 \,d{\overline\mu}_{n-k-1}\,+\,
\end{equation}
 $$
  \kern5em
  \int_{  {\overline X}_{n-k-1}}  M_{n-k}( t_{n-k}N^2_{n-k})
f^2
 \,d{\overline\mu}_{n-k-1}
 \,+\,
  \int_{  { X}_{n-k-1}}  h_{n-k-1}^2\ln h_{n-k-1}^2
   \,d{\mu}_{n-k-1}
   .
$$ 
On the other hand, if we integrate the inequality $(R_k)$ w.r.to
$\,d\mu_{n-k-1}$ we get
\begin{equation}\label{rkint}
\int_{\overline{X}_{n-k-1}} f^2\ln  f^2 \,d{\overline{\mu}}_{n-k-1}
\leq
 \int_{\overline{X}_{n-k-1}} \sum_{i=n-k+1}^n t_i N_i^2\,\Gamma_i(f)
 \,d{\overline{\mu}}_{n-k-1}\,+\,
\end{equation} 
 $$
  \int_{\overline{X}_{n-k-1}}  
   \sum_{i=n-k+1}^n M_i(t_i N_i^2)f^2
  \,d{\overline{\mu}}_{n-k-1}\,+\,
    \int_{\widetilde{X}_{n-k-1,n-k}} 
h_{n-k}^2\ln h_{n-k}^2  \,d{\widetilde{\mu}}_{n-k-1,n-k}.
$$
Applying (\ref{stepint}) to 
estimate the last term in (\ref{rkint}) 
we finally deduce $(R_{k+1})$, and this concludes the 
induction step.

We   are now ready to prove Statements 1 and 2 of Theorem \ref{main}.

For Statement 1, we apply $(R_k)$ with $k=n$ for $n\geq 1$. 
Then  our conclusion   (\ref{result1})  differs from $(R_n)$ only by the  expression  
$$
\int_{X_0} h_{0}^2\ln h_{0}^2  \,d{\mu}_{0}.
$$
This term is treated with the assumption $(H_0)$.
Indeed, using Lemma  \ref{basis}, we can write
 $$
  \int_{X_0} h_{0}^2\ln h_{0}^2  \,d{\mu}_{0}
  \leq
  t_0\int_{X_0}\Gamma_0(h_0)\, d\mu_0
 +M_0(t_0) \int_{X_0} h_0^2\, d\mu_0
 $$
 $$
  \leq
   t_0\int_{X}\Gamma_0(f)\, d\mu
 +M_0(t_0)\int_{X} f^2\, d\mu
 $$
and Statement 1 follows.

%

Statement 2 can be proved  exactly in the same way; 
indeed, it is sufficient 
to notice that
the variable $t_0$  and $M(t_0)$ can be
considered as two constants
$a>0$ and $b\geq 0$ (also $b \in \Ri$ can be considered),
so that $(H'_0)$ can be used in place of $(H_0)$. 
Statements 3 is just a special case of Statement 1.
The proof is completed.

\section{Examples of Logarithmic  Sobolev inequality}\label{examp}
In this section, we present several 
concrete situations
where either a Gross inequality of classical or defective type,
or a logarithmic Sobolev inequality with parameter
are satisfied. Most situations are essentially known, but
in some cases we present extensions of known results which
are not completely standard.
These examples can be used as building blocks
and combined to obtain a variety of \emph{semiproduct operators}
to which our general theory applies.
Recall that on any measure space $(X,\mu)$ the \emph{entropy}
of a non negative measurable function $f$ is defined by
\begin{equation*}
  {\rm \bf Ent}_{\mu}(f):= \int_X f\ln f\, d\mu.
\end{equation*}

 \subsection{Gross type inequalities}
We begin by recalling a few important situations where a
Gross type inequality is satisfied.

\begin{enumerate}
\item
The basic example is given by $X=\mathbb{R}^{n}$
with the standard Gaussian measure
$d\mu(x)=d\gamma_n(x)=(2\pi)^{-n/2}e^{-\frac{\vert x\vert^2}{2}}$
where $dx$ is the Lebesgue measure.
The classical Gross inequality is the following 
\begin{equation}\label{simplegross}
  \|f\|_{2}=1 \quad\implies\quad
  {\rm \bf Ent}_{\gamma_n}(f^2)
  \leq
  c\int_ {\R^n} \vert\nabla f\vert^2\,d{\gamma_n},
  \qquad c=2  
\end{equation}
where 
$\vert\nabla f\vert^2=\sum_{i=1}^n\vert \frac{\partial f}{\partial x_i}\vert ^2$.
An essential feature of the inequality
is that the constant $c=2$ does not depend 
on the space dimension $n$. Note also that the inequality is
sharp, see \cite{gr1}. 
See also inequalities \eqref{eq:logggg2} and \eqref{eq:logOmega2}
in Section \ref{sec-lsip} below,
developed from the Gross inequality for an infinite measure and   the uniform measure on an open set 
$\Omega\subset \mathbb{R}^{n}$ of finite measure.

\item
An analogous result holds on the $n$-dimensional torus $\Ti^n=[-\pi,\pi]^n$.
Denoting by $\mu_n$ the normalized uniform measure on $\Ti^n$,
we have for all $f\in C^1(\Ti^n)$ (i.e.~$f\in C^{1}(\mathbb{R}^{n})$ 
and $2\pi$-periodic in each variable)
\begin{equation}\label{tore}
  \vert\vert f\vert\vert_2=1 \quad\implies\quad
{\rm \bf Ent}_{\mu_n}(f^2)
\leq
c\int_ {\T^n} \vert\nabla f\vert^2\,d\mu_n\,\qquad
c=2. 
\end{equation}
For $n=1$, the inequality is sharp, see \cite{we} (see also \cite{ey}  for a simple proof).
Since the constant $c=2$ is independent of the dimension $n$, 
the general case $n\geq 1$ is obtained by the classical argument of tensorization.
This inequality (\ref{tore}) also extends to the infinite dimensional case.
More precisely, denote by $\Ti^{\infty}$ the infinite
dimensional torus with Haar probability measure
$\mu^{\N}$ (see \cite{b} for more details and in particular
a study of several  heat kernels on $\Ti^{\infty}$). Then the above Gross
type inequality is valid for all {\em cylindrical} functions
in $C^{1}(\Ti^{\infty})$, i.e., depending on a finite number of 
coordinates.

\item
Consider for some $L>0$
an interval $X=[0,L]\subset\Ri$ endowed
with the uniform measure $d\mu(x)=\frac{dx}{L}$; no periodic conditions at 
the boundary are imposed. Then we have
$$
\vert\vert f\vert\vert_2=1
\quad\implies\quad
{\rm \bf Ent}_{\mu}(f^2)
\leq
\frac{2L^2}{\pi^2}\int_ {[0,L]} \vert\nabla f\vert^2\,d\mu
$$
(see \cite{ey}, \cite{g} for this and related results).
Compare this result with the case of periodic boundary
conditions, where we have
$$
\vert\vert f\vert\vert_2=1 \quad\implies\quad
{\rm \bf Ent}_{\mu}(f^2)
\leq
\frac{L^2}{2\pi^2}\int_ {[0,L]} \vert\nabla f\vert^2\,d\mu. $$
When $L=2\pi$ this is contained in \eqref{tore}.

\item
The case of an interval $I=(a,b) \subset\Ri$  
and a general weighted Lebesgue measure
$d\mu(x)=\frac{1}{Z}e^{-V(x)}\,dx$, where
$Z= \int_I  e^{-V(x)}\,dx$,
is studied in \cite[Section 7]{l}.
Then the following results are valid:
\begin{enumerate}
\item
When  $I=(0,2\pi)$ and $V(x)=-2{\gamma}\ln \sin \left(\frac{x}{2}\right)$ 
with $\gamma > 1/2$,
$$
\vert\vert f\vert\vert_2=1 \quad\implies\quad
{\rm \bf Ent}_{\mu}(f^2)
\leq
\frac{8}{1+2\gamma}\int_ {I} \vert\nabla f\vert^2\,d\mu.
$$
\item
When  $I=(-1,1)$ and $V(x)=-2{\alpha}\ln(1-x)-2{\beta}\ln(1+x)$,
with
\begin{equation*}
  \gamma={\rm min\,}(\alpha,\beta),\qquad
  \delta={\rm max\,}(\alpha,\beta),
\end{equation*}
if $\gamma> 1/2$  then
$$
\vert\vert f\vert\vert_2=1 \quad\implies\quad
{\rm \bf Ent}_{\mu}(f^2)
\leq
\frac{\delta}{\gamma (1+2\delta) }\int_ {I} \vert\nabla f\vert^2\,d\mu.
$$
\end{enumerate}

\item
Let $\lambda>-\frac{1}{2}$ and the probability measure $d\mu_{\lambda}(x)=A_{\lambda}(1-x^2)^{\lambda-(1/2)}\,dx$ 
on $ I=[-1,1]$.
Then for any real-valued $f\in C^2([-1,1])$, we have the following  sharp log-Sobolev inequality
$$
\vert\vert f\vert\vert_2=1 \quad\implies\quad
{\rm \bf Ent}_{\mu_{\lambda}}(f^2)
\leq
\frac{2}{2\lambda+1} \int_I H_{\lambda}f(x)f(x)\, d\mu_{\lambda}(x),
$$
with the infinitesimal generator is given by $H_{\lambda}f(x)=-(1-x^2)f''(x)+({2\lambda+1} )xf(x)$, 
$x\in I$, (see   \cite[Th.1, p.268]{mw}. Here the "gradient" has the following form
$
\nabla_1f(x)=\sqrt{1-x^2}. f'(x)=\sqrt{1-x^2}. \nabla f(x)
$
and
$$
\int_I H_{\lambda}f(x)f(x)\, d\mu_{\lambda}(x)
=
\int_I \vert \nabla_1f(x)\vert^2\, d\mu_{\lambda}(x)
$$
(compare with 4.(b)).
\item
Similar inequalities can be proved for some classes
of manifolds, both compact and non compact;
here we prefer to skip this line of research and refer
the reader to
the survey ~\cite{gr2} and the 
book by F-Y.Wang \cite[Example 5.7.2]{wa1}).
\end{enumerate}

\subsection{Logarithmic  Sobolev inequalities  with parameter}\label{sec-lsip}

We now examine a few cases where a logarithmic Sobolev
inequality with parameter is known to hold. Notice that
the following results are partially new (although they can
be proved by suitable extension of the standard techniques).

\begin{enumerate}
\item
The simplest logarithmic Sobolev inequality with parameter
corresponds to
the choice $X=\Ri^n$ and $\mathcal{L}=\Delta$,
the usual (positive) Laplacian. Then for any $t>0$ we have
\begin{equation}\label{grossplat}
\int_{\R^n} f^2\,\ln \frac{f^2}{\vert\vert f\vert\vert^2_2}\,
dx
\leq
t\int_{\R^n} \Delta f.f\, dx+ M(t)\vert\vert f\vert\vert^2_2
\end{equation}
where $M(t)=-\frac{n}{2}\ln (\pi e^2 t)$ and $dx$ is the
Lebesgue measure. Notice that we impose no constraint on
$\|f\|_{2}$. The inequality is sharp and the extremal
functions are Gaussian functions of the form
$p_t(x)=(2\pi t)^{-n/2}  
\exp(-\frac{\vert x-a\vert^2}{2t})$, $a\in \mathbb{R}^{n}$
(see \cite{c}, \cite{bcl}).
We shall refer to \eqref{grossplat} as a
``flat'' Gross inequality, to emphasize that the supporting
space $\mathbb{R}^{n}$ is viewed as a manifold with
zero curvature, as opposed to more general inequalities on
more general, nontrivial manifolds. Important consequences
can be drawn from \eqref{grossplat}, in particulary it can
be extended to more general measures of gaussian type
(see \cite{bcl} for some examples). 
As an application of our theory, we shall extend
\eqref{grossplat} to operators of Grushin type.

We recall that inequality \eqref{grossplat} is actually
equivalent to the Gross inequality \eqref{simplegross},
as it can be proved via a simple argument. For instance,
it is possible to deduce \eqref{grossplat} from Gross in
two steps. First of all, we apply
\eqref{simplegross} to a function $g$ with weighted $L^{2}$
norm $\|g\|_{2}=1$,
and defining $f$ via 
$f^2=(2\pi)^{-n/2}e^{-\frac{\vert x\vert^2}{2}}g^2$ 
we obtain
$$
\int_{\R^n} f^2\ln f^2\,dx\leq
2\int_{\R^n} \vert  \nabla f\vert^ 2\,dx
-n-\frac{n}{2}\ln (2\pi).
$$
Notice also that the unweighted $L^{2}$ norm of
$f$ is $\|f\|_{2}=1$.
For the second step, we perform a dilation
$h_{\lambda}(x)=\lambda x$ with $\lambda>0$:
since $\Delta$ satisfies $\Delta(f \circ h_{\lambda})
={\lambda}^2(\Delta f)\circ h_{\lambda}$
changing variables in the integrals we obtain
$$
\int_{\R^n} f^2\,\ln f^{2}\,
dx
\leq
2{\lambda}^2\int_{\R^n} \Delta f.f\, dx
-\frac{n}{2}\ln (2\pi e^2{\lambda}^2).
$$
It is now sufficient to set $t=2 \lambda^{2}$
and rescale the norm of $f$ to obtain, for any function $f$,
\begin{equation}\label{eq:logggg}
\int_{\R^n} f^2\,\ln \frac{f^2}{\vert\vert f\vert\vert^2_2}\,
dx
\leq
t\int_{\R^n} \Delta f.f\, dx
-\frac{n}{2}\ln ( \pi e^2 t)\vert\vert f\vert\vert^2_2
  \end{equation}
which is precisely \eqref{grossplat}. As a side remark we observe
that the log-Sobolev inequality \eqref{grossplat} is stable by  
the dilation structure imposed by the operator, in this case
the Laplacian on $\mathbb{R}^{n}$. We shall encounter
a similar situation in the case of Grushin operators 
(see Example 1 in Section \ref{appli}).
\\
Notice also that, choosing  
$t=(\pi e^2)^{-1}$ in \eqref{eq:logggg}, we deduce that the log-Sobolev inequality 
of Gross type is satisfied for an  {\em infinite} measure i.e. the Lebesgue measure on $\Ri^n$:
\begin{equation}\label{eq:logggg2}
\int_{\R^n} f^2\,\ln \frac{f^2}{\vert\vert f\vert\vert^2_2}\,
dx
\leq
\frac{1}{\pi e^2}\int_{\R^n} \Delta f.f\, dx.
\end{equation}
Here again, the inequality \eqref{eq:logggg2} implies \eqref{eq:logggg} by dilation.

\item
When $X=\Omega$ is an open set of $\Ri^n$ with Lebesgue 
finite measure $0<\vert\Omega\vert<\infty$, we easily deduce  by restriction  the
 super log-Sobolev inequality  on $\Omega$ for  smooth function  with compact support on $\Omega$  from super log-Sobolev inequality on the whole Euclidean space.
 This formulation  corresponds to the Dirichlet problem on $\Omega$ for the Laplacian. Indeed,
for any  $f\in C_0^{\infty}(\Omega)$ we have
\begin{equation}\label{eq:logOmega}
  \int_{\Omega} f^2\,\ln \frac{f^2}{\vert\vert f\vert\vert^2_2}\,
  d\mu
  \leq
  t\int_{\Omega} \Delta f.f\, d\mu
  +\ln \left( \vert \Omega\vert (\pi  e^2t)^{-\frac{n}{2}}\right)\vert\vert f\vert\vert^2_2
\end{equation}
where $\mu$ is the normalized Lebesgue measure on $\Omega$ and
$\vert\vert f\vert\vert^2_2=\vert\vert f\vert\vert^2_{L^2(\mu)}$.
To prove \eqref{eq:logOmega} it is sufficient to
take a smooth function $f$
with compact support on $\Omega$ and
apply \eqref{eq:logggg} to
$f/\sqrt{\vert\Omega\vert}$. 
\\
Moreover, as
in the first example, we deduce a log-Sobolev inequality of Gross type on $\Omega$
for the probability measure $\mu$  by setting
$t=(\pi e^2)^{-1}\vert \Omega\vert^{2/n}$ in \eqref{eq:logOmega}.
We get 
\begin{equation}\label{eq:logOmega2}
  \int_{\Omega} f^2\,\ln \frac{f^2}{\vert\vert f\vert\vert^2_2}\,
  d\mu
  \leq
  (\pi e^2)^{-1}\vert \Omega\vert^{2/n}\int_{\Omega} \Delta f. f\, d\mu.
\end{equation}

\item
An important extension of the previous inequalities can be
obtained in the case of
Lie groups of polynomial growth (up to optimality).
Let $G$ be such a group with global dimension $D$, and
consider a H\"ormander system of left-invariant vector fields
$X=(X_1, X_2,\dots,X_m)$ on $G$.
We denote by $L$ the sub-Laplacian
$L=-\sum_{i=1}^m X_i^2$ and by $d$ the local dimension 
associated with $X$. We assume that $d\leq D$. 
Then the semigroup $(T_t)$ generated by $L$ satisfies
$$
\vert\vert T_t\vert\vert_{2\rightarrow \infty}\leq c_{0}\,t^{-n/4}
$$
for any $t>0$ and any $n\in [d,D]$ (see \cite{vsc})
($\vert\vert T \vert\vert_{p\rightarrow q}$ denotes
 the norm of the operator $T: L^p \rightarrow L^q$).
By Theorem 2.2.3 in \cite{d}, 
we obtain the logarithmic  Sobolev  inequality with parameter
$$
\int_{G} f^2\,\ln \frac{f^2}{\vert\vert f\vert\vert^2_2}\,
dx
\leq
t\int_{G} {\cal L} f.  f\, dx+ M(t)\vert\vert f\vert\vert^2_2
$$
where $M(t)=2\ln(c_{0})-\frac{n}{2}\ln (t/2)$
and $dx$ is the bi-invariant
 Haar  measure. 
Notice that,  in the non compact case, the parameter appears 
in a natural way, differently from the compact case.
In this context, the log-Sobolev inequalities \eqref{eq:logggg2} and   \eqref{eq:logOmega2} hold true with appropriate constants.

\item 
A further extension concerns the case of Lie groups
of exponential growth. We shall consider here
\emph{Damek-Ricci spaces}, also known as
\emph{harmonic $NA$ groups}
(\cite{ADY,DR1});
these solvable Lie groups include all symmetric spaces of 
non compact type and rank one.
We briefly recall the definition of the spaces.
Let $\mathfrak{n}=\mathfrak{v}\oplus\mathfrak{z}$ be an Heisenberg type algebra and let $N$ be the connected and simply connected Lie group associated with  $\mathfrak{n}$. Let $S$ be the one-dimensional extension of $N$ obtained by making $A=\mathbb{R}^+$ act 
on $N$ by homogeneous dilations. We denote by $Q$ the homogeneous dimension of $N$ and by $n$ the dimension of $S$. Let $H$ denote a vector in $\mathfrak{a}$ acting on
$\mathfrak{n}$ with eigenvalues $1/2$ and (possibly) $1$; we extend the inner product 
on $\mathfrak{n}$ to the algebra $\mathfrak{s}=\mathfrak{n}\oplus\mathfrak{a}$, by requiring $\mathfrak{n}$
and $\mathfrak{a}$ to be orthogonal and $H$ to be a unit vector. We denote by $d$ the left-invariant distance on $S$ associated with the
Riemannian metric on $S$ which agrees with the inner product 
on $\mathfrak{s}$ at the identity. 
The Riemannian manifold $(S,d)$ is usually referred to as \emph{Damek-Ricci space}.

Note that $S$ is nonunimodular in general; denote by
$\lambda$ and $\rho$ the left and right Haar measures on $S$, respectively. It is well known that the spaces $(S,d,\lambda)$ 
and $(S,d,\rho)$ are of \emph{exponential growth}.
In particular, the two following Laplacians on $S$ 
have been the object of investigation\,:
\begin{itemize}
\item[(i)] The Laplace-Beltrami operator $\Delta_S$ associated 
with the Riemannian me-tric $d$.
The operator $-\Delta_S$ is left-invariant, it is essentially self-adjoint on $L^2(S,\lambda)$ and its spectrum is the half line $[Q^2/4,\infty)$.
\item[(ii)] The left-invariant Laplacian $\mathcal L=\sum_{i=0}^{n-1}{Y}_i^2$, where $Y_0,\dots,Y_{n-1}$ are
 left-invariant vector fields such that at the identity $Y_0=H$, $\{Y_1,\ldots,Y_{m_{\mathfrak{v}}}\}$ is an orthonormal basis of $\mathfrak{v}$ 
and $\{Y_{m_{\mathfrak{v}}+1},\ldots,Y_{n-1}\}$ is an orthonormal basis of $\mathfrak{z}$. The operator $-\mathcal L$ is essentially self-adjoint on $L^2(S,\rho)$ and its spectrum is $[0,\infty)$. 
\end{itemize}
   
In case (ii) the theory of heat kernels is still under
development, and we shall focus here on the case (i)
of the Laplace-Beltrami operator $-\Delta_S$ on a
Damek-Ricci space $S$. This operator was studied in
\cite{APV}; in particular, the
following pointwise estimate on the heat kernel $h_{t}$
corresponding to the heat operator $e^{t \Delta_S}$, $t >0$
was proved in Proposition 3.1 \cite{APV}:
there exists a positive constant $C$
such that, for every $t>0$ and \,for any $r\in\mathbb R^+$, we have
\begin{equation}
0< h_t (r) \leq\,
\begin{cases}
C \, t^{-n/2}\,(1+r)^{\frac{n-1}2}\,e^{-\frac Q2r}\,e^{-\frac14 \,\{Q^2 t+\frac{r^2}t\}}
&\text{if}\hspace{3mm} t\!\le\!1\!+\!r\,,\\
C\,t^{-3/2}\,(1+r)\,e^{-\frac Q2r}\,e^{-\frac14 \,\{Q^2 t+\frac{r^2}t\}      }
&\text{if}\hspace{3mm} t \!>\!1\!+\!r.
\end{cases}
\end{equation}

Here $r$ denotes the radial variable on $S$.
As a consequence  we have
$$ 
 \|e^{t\Delta_S}\|_{1 \rightarrow \infty}\le \begin{cases}
C \,t^{-n/2}\, \,e^{-\frac{Q^2 t}{4}}
&\text{if}\hspace{3mm}0 <t \!\le\!1\!\,,\\
C \, t^{-3/2}\, \,e^{-\frac{Q^2 t}{4}}      
&\text{if}\hspace{3mm} t\!>\!1\!\,,\\
\end{cases}
$$
which implies 
$$ 
 \|e^{t\Delta_S}\|_{2 \rightarrow \infty}\le  \begin{cases}
\sqrt{C}\,t^{-n/4}\, \,e^{-\frac{Q^2 t}{8}}
&\text{if}\hspace{3mm}0 <t\!\le\!1\!\,,\\
\sqrt{C} \, t^{-3/4}\, \,e^{-\frac{Q^2 t}{8}}      
&\text{if}\hspace{3mm} t\!>\!1\!\,.\\
\end{cases}
$$
Thus we can apply again Davies' result and we obtain the 
following logarithmic
Sobolev inequality with parameter
\begin{equation}
  \int_{{S}}f^{2}\ln \frac{f^{2}}{\|f\|^{2}_{2}}dx
  \le t (-\Delta_S f,f)_{L^{2}}+M(t)\|f\|^{2}_{2}
\end{equation}
with
$$  
M(t)=  \begin{cases}
\ln(C \, 2^{n/2})
-\frac{n}{2} \ln{t} -\frac{Q^2 t}{8}
&\text{if}\hspace{3mm}0 <t\!\le\!1\!\,,\\
\ln(C \, 2^{3/2}) 
{-\frac{3}{2}} \ln{t}  -\frac{Q^2 t}{8}
&\text{if}\hspace{3mm}t\!>\!1\!\,.\\
\end{cases}
$$

\item 
Our last example is taken from the theory of Schr\"odinger
operators.
Consider on ${\Ri^n}$ the \emph{electromagnetic
Schr\"odinger operator}
$$  H=-(\nabla-i A(x))^{2}+V(x).$$
Then it is possible to prove, under very general assumptions
on the potentials $V,A$, that the heat kernel $e^{tH}(x,y)$
satisfies a pointwise gaussian estimate for all times.
In order to give the precise assumptions on the
coefficients,
we need to recall the definition of Kato class and Kato norm:

\begin{defin}\label{def.kato}
The measurable function $V(x)$ on  ${\Ri^n}$, $n\geq3$, 
is said to belong to the \emph{Kato class} if
\begin{equation}\label{eq.katoclass}
    \lim_{r\downarrow 0}\sup_{x\in{\R^n}}
      \int_{|x-y|<r}\frac{|V(y)|}{|x-y|^{n-2}}dy=0.
\end{equation}
Moreover, the \emph{Kato norm} of $V(x)$ is defined as
\begin{equation}\label{eq.katonorm}
    \|V\|_{K}=\sup_{x\in {\R^n}}
    \int_{{\R^n}}\frac{|V(y)|}{|x-y|^{n-2}}dy.
\end{equation}
For $n=2$ the kernel $|x-y|^{2-n}$ is replaced by $\ln(|x-y|^{-1})$.
\end{defin}

Then our pointwise gaussian estimate is the following 

\begin{pro}\label{pro:heatk}
  Consider the self-adjoint operator $H=-(\nabla-iA(x))^{2}+V(x)$
  on $L^{2}(\mathbb{R}^{n})$, $n\ge3$.
  Assume that $A\in L^{2}_{loc}(\mathbb{R}^{n}, \mathbb{R}^{n})$, 
  moreover $V$ is real valued and the positive
  and negative parts $V_{\pm}$ of $V$ satisfy
  \begin{equation}\label{eq:Vpiu}
    V_{+}\ \text{is of Kato class},
  \end{equation}
  \begin{equation}\label{eq:Vmeno}
    \|V_{-}\|_K< c_{n}=\pi^{n/2}/\Gamma\left(\frac{n}{2}-1\right).
  \end{equation}
  Then $e^{-tH}$ is an integral operator and its heat kernel
  $p_{t}(x,y)$ satisfies the pointwise estimate
  \begin{equation}\label{eq:heatest}
    |p_{t}(x,y)|\le
    \frac{(2\pi t)^{-n/2}}{1-  \|V_{-}\|_{K}/c_{n}}
      e^{-|x-y|^{2}/(8t)}.
  \end{equation}
\end{pro}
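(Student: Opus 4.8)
The plan is to reduce the estimate to the scalar (no magnetic field) case by the diamagnetic inequality, and then to control the scalar heat kernel by a perturbation/Feynman–Kac argument using the Kato‐class hypotheses. First I would dispose of the magnetic potential: by Kato's diamagnetic inequality one has, for the semigroups generated by $H=-(\nabla-iA)^2+V$ and by the scalar operator $H_0=-\Delta+V$, the pointwise domination $|e^{-tH}f|\le e^{-tH_0}|f|$, hence $|p_t(x,y)|\le p^{(0)}_t(x,y)$ where $p^{(0)}_t$ is the (nonnegative) kernel of $e^{-tH_0}$. Thus it suffices to prove \eqref{eq:heatest} for $H_0$, i.e. with $A\equiv0$; note that the hypothesis $A\in L^2_{loc}$ is exactly what is needed to make $H$ well defined as a self-adjoint operator and to legitimize the diamagnetic bound.

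Next I would treat the negative part $V_-$ as the essential perturbation and $V_+$ as a harmless nonnegative term. Writing $H_0=-\Delta+V_+-V_-$, monotonicity of the semigroup in the potential gives $p^{(0)}_t(x,y)\le q_t(x,y)$, where $q_t$ is the heat kernel of $-\Delta-V_-$. For the latter I would use the Feynman–Kac representation
\[
q_t(x,y)=g_t(x,y)\,
\mathbb{E}^{x,y}_{t}\Bigl[\exp\Bigl(\int_0^t V_-(\omega_s)\,ds\Bigr)\Bigr],
\]
with $g_t(x,y)=(4\pi t)^{-n/2}e^{-|x-y|^2/(4t)}$ the free kernel and $\mathbb{E}^{x,y}_t$ expectation with respect to the Brownian bridge from $x$ to $y$ in time $t$. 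The bridge expectation is then estimated by expanding the exponential and applying the standard Khas'minskii‑type bound: each iterated integral is dominated using the Chapman–Kolmogorov identity and the elementary inequality
\[
\sup_{x}\int_0^\infty\!\!\int_{\mathbb{R}^n} g_s(x,z)\,|V_-(z)|\,dz\,ds
= c_n^{-1}\|V_-\|_K,
\qquad c_n=\pi^{n/2}/\Gamma\!\left(\tfrac n2-1\right),
\]
which is precisely the normalization chosen in \eqref{eq:Vmeno} (here one uses $\int_0^\infty g_s(x,z)\,ds=c_n^{-1}|x-z|^{2-n}$). Summing the geometric series produced by the $k$-th term being bounded by $(\|V_-\|_K/c_n)^k$ yields the factor $(1-\|V_-\|_K/c_n)^{-1}$, and the condition $\|V_-\|_K<c_n$ is exactly what guarantees convergence. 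This gives $q_t(x,y)\le (1-\|V_-\|_K/c_n)^{-1} g_t(x,y)$; combined with $g_t(x,y)=(4\pi t)^{-n/2}e^{-|x-y|^2/(4t)}\le (2\pi t)^{-n/2}e^{-|x-y|^2/(8t)}$ this is \eqref{eq:heatest}. The role of $V_+\in$ Kato class is only to ensure that $e^{-tH_0}$ is a well-defined integral operator with a continuous kernel (so that the pointwise inequality makes sense) and to allow the monotonicity step; it does not enter the bound.

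The main obstacle — though it is technical rather than conceptual — is the careful justification that $e^{-tH}$ really is an integral operator under these low-regularity hypotheses: one must check that the Feynman–Kac formula is valid, that the Khas'minskii series argument is rigorous (the exchange of sum and integral, the measurability of the bridge functionals), and that the diamagnetic inequality applies with only $A\in L^2_{loc}$. All of these are by now standard in the Schrödinger-operator literature (Simon's survey, Aizenman–Simon, Broderix–Hundertmark–Leschke for the magnetic case), so I would invoke those results; the only genuinely quantitative input is the sharp constant $c_n$, which comes from the explicit value of the Riesz potential $\int_0^\infty g_s\,ds$, and this is what makes the constant in \eqref{eq:heatest} explicit.
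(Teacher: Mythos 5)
Your reduction of the magnetic case to the scalar case is exactly the paper's: Simon's diamagnetic inequality $|e^{t[(\nabla-iA)^{2}-V]}g|\le e^{t(\Delta-V)}|g|$ under $A\in L^{2}_{loc}$, pushed down to the kernels by an approximate identity. The divergence is in the scalar case: the paper does not prove it at all, but cites the second part of Proposition 5.1 of \cite{pda-vp}, whereas you attempt to supply a proof via Feynman--Kac on the Brownian bridge and a Khas'minskii-type series. That is a legitimate strategy in outline (and close in spirit to the Duhamel/Born iteration used in the cited reference), but as written it has a genuine gap in the one step that actually produces the constants in \eqref{eq:heatest}.

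Concretely, two things go wrong in your key quantitative claim. First, the normalization: with $g_s(x,z)=(4\pi s)^{-n/2}e^{-|x-z|^{2}/(4s)}$ one has $\int_{0}^{\infty}g_s(x,z)\,ds=\frac{\Gamma(n/2-1)}{4\pi^{n/2}}|x-z|^{2-n}=\frac{1}{4c_n}|x-z|^{2-n}$ (this is the Green function of $-\Delta$), not $c_n^{-1}|x-z|^{2-n}$; so your ``elementary inequality'' identifying the supremum of the time-integrated potential with $c_n^{-1}\|V_-\|_K$ is off by a factor of $4$, and the geometric series does not close with ratio $\|V_-\|_K/c_n$ as claimed. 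Second, and more seriously, the iterated bridge integrals $\frac{1}{g_t(x,y)}\int_{0<s_1<\dots<s_k<t}\int g_{s_1}(x,z_1)V_-(z_1)\cdots g_{t-s_k}(z_k,y)\,dz\,ds$ cannot be dominated by $(\mathrm{const})^{k}$ using Chapman--Kolmogorov alone, because the potential is inserted between the Gaussian factors; one must split each product $g_a(u,z)g_b(z,v)$ pointwise so as to extract a Gaussian in $|x-y|$ and leave an integrable factor in $(z,s)$. It is precisely this splitting that degrades the Gaussian from $e^{-|x-y|^{2}/(4t)}$ to $e^{-|x-y|^{2}/(8t)}$ and the prefactor from $(4\pi t)^{-n/2}$ to $(2\pi t)^{-n/2}$ in \eqref{eq:heatest}; your argument instead claims the stronger intermediate bound $q_t\le(1-\|V_-\|_K/c_n)^{-1}g_t(x,y)$ with the sharp Gaussian and only weakens it cosmetically at the end, which is not what the iteration yields with the constant $c_n$ of \eqref{eq:Vmeno}. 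To repair the proof you would either have to carry out the splitting and track the resulting constants (as is done in \cite{pda-vp}), or simply cite that result as the paper does.
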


From estimate \eqref{eq:heatest} we obtain
$$  \|e^{-tH}\|_{1 \rightarrow \infty}\le 
      \frac{(2\pi t)^{-n/2}}{1-  \|V_{-}\|_{K}/c_{n}}$$ and as a consequence
$$  \|e^{-tH}\|_{2 \rightarrow \infty}\le 
      \frac{(2\pi t)^{-n/4}}{(1- \|V_{-}\|_{K}/c_{n})^{1/2}}
$$ 
Again by Davies' result we obtain the logarithmic
Sobolev inequality with parameter
\begin{equation}\label{eq:electromaghlogsob}
  \int_{{\R^n}}f^{2}\ln \frac{f^{2}}{\|f\|^{2}_{2}}dx
  \le t (Hf,f)_{L^{2}}+M(t)\|f\|^{2}_{2}
\end{equation}
with
$$  
     M(t)=\ln\left(
    \frac{(\pi t)^{-n/2}}{1-  \|V_{-}\|_{K}/c_{n}}
    \right).
$$
We now  give a concise but complete proof of the heat kernel estimate
in Proposition \ref{pro:heatk}.

\begin{proof}
 Assume first that the magnetic part is zero: $A\equiv 0$.
 Then  the estimate \eqref{eq:heatest} under  the assumptions
 \eqref{eq:Vpiu}, \eqref{eq:Vmeno} was proved in the paper
 \cite{pda-vp} (see the second part of Proposition 5.1).
  
  Consider now the case the magnetic part is different from zero.
  We recall Simon's diamagnetic pointwise inequality
  (see e.g.~Theorem B.13.2 in \cite{simon}), which holds under 
  the assumption $A\in L^{2}_{loc}$ (and actually even weaker):
  for any test function $g(x)$,
$$    |e^{t[(\nabla-iA(x))^{2}-V]}g|\le
    e^{t (\Delta-V)}|g|.$$
  Now, if we choose a sequence 
  $g_{\epsilon}=\epsilon^{-n}g_{1}(x/\epsilon)$ of (positive)
  test functions converging to a Dirac delta,
  we apply the estimate to the test functions
  $g=g_{\epsilon}$ translated at the point $y$, and
  let $\epsilon\to0$, 
  we obtain an analogous pointwise  inequality
  for the corresponding heat kernels:
$$    
|e^{t[(\nabla-iA(x))^{2}-V]}(x,y)|\le e^{t (\Delta-V)}(x,y) 
$$  
and \eqref{eq:heatest} follows.
\end{proof}
\end{enumerate}

\section{Examples of applications of the Main Theorem}\label{appli}

In this section we list several applications of the main
result Theorem \ref{main}. They are obtained by combining
in a suitable way the Gross type or  super log-Sobolev
inequalities examined in the previous sections. It is clear
that many more examples can be constructed by this
procedure, but we decided to focus on the most interesting ones.
Notice that some of our examples involve hypoelliptic
operators, however our proofs never use this property.

\begin{enumerate}
\item
{\it Grushin type operators on $\Ri^n$}.

Theorem \ref{main} is especially well suited to study operators
of \emph{Grushin type}, which can be defined as sums of the squares of
$n$ vector fields $Y_{1},\dots,Y_{n}$ on
$\mathbb{R}^{n}$ with the following structure:
\begin{equation}\label{grushgen}
 Y_1=\frac{\partial}{\partial x_{1}}, \quad   Y_{i}=\rho_{i}(x_{1},\dots,x_{i-1})\frac{\partial}{\partial x_{i}}, \; i=2,\dots,n.
\end{equation}
These  vector fields are divergence-free, provided the 
$\rho_{j}$'s are $C^1$; note however that our theory
does not require any smoothness of the coefficients and
can be applied also in more general situations.

In the following, we consider a few relevant cases which
exemplify the main features of the theory; it is clear that
further generalizations are possible.
\\[2mm]

$\bullet$ 
{\bf Grushin operators with polynomial coefficients.}
Consider on $\mathbb{R}^{n}$ the vector fields $Y_{1},\dots,Y_{n}$
with the structure \eqref{grushgen}, and
$\rho_{2},\dots,\rho_{n}$ are functions satisfying the
nondegeneracy condition \eqref{zerocond}, where $N_{i}=\rho_{i}$.
Note that if the $\rho_{i}$ are smooth,
the family  $(Y_{1},\dots,Y_{n})$ satisfies H\"ormander's condition
and hence the operator
\begin{equation*}
  \mathcal{L}=-\sum_{i=1}^n Y_i^2
\end{equation*}
is hypoelliptic (see \cite{b-g}). 
However, even without assuming smoothness, 
by a direct application of Theorem \ref{main}
we obtain the following logarithmic Sobolev inequality: for any $t>0$,
\begin{equation}\label{lsgrus}
  \int_{\R^n} f^2\,\ln \frac{f^2}{\vert\vert f\vert\vert^2_2}\,
dx
\leq
t\int_{\R^n} {\cal L} f. f\, dx- \frac{n}{2} \ln (\pi e^2 t)\vert\vert f\vert\vert^2_2
-\int_{\R^n} 
N(x)\vert f(x)\vert^2\,dx 
\end{equation}
where 
$N(x):=\frac{1}{2}
\sum_{i=2}^n \ln \rho^2_i(x)$ and $dx$ is the Lebesgue measure on $\Ri^n$.

It is interesting to notice that if the
operator $\mathcal{L}$
has some dilation invariance, then the log-Sobolev
inequality \eqref{lsgrus} is stable, in the following sense.
Consider for $\lambda>0$ the
non isotropic dilation on $\mathbb{R}^{n}$
\begin{equation*}
  H_{\lambda}(x)=
  ({\lambda}^{a_1}x_1,{\lambda}^{a_2}x_2, \dots ,{\lambda}^{a_n}x_n),\qquad x\in \Ri^n
\end{equation*}
with $a_i>0$, and assume that there exists an index $d>0$
such that, for any ${\lambda}>0$,
$$
{\cal L}(f \circ H_{\lambda})={\lambda}^{2d}
({\cal L}f) \circ H_{\lambda}.
$$
This is equivalent to assume that
for any $i=1, \dots ,n$, ${\lambda}>0$ and $x\in \Ri^n$
$$
{\lambda}^{2a_i}{\rho}_i^2(x)
={\lambda}^{2d}{\rho}_i^2(H_{\lambda}(x)).
$$
Then if we replace $f$ by 
$f\circ H_{\lambda}$ in (\ref{lsgrus}), 
after a change of variables, we obtain that
for any $t>0$ and $\lambda>0$
$$
  \int_{\R^n} f^2\,\ln \frac{f^2}{\vert\vert f\vert\vert^2_2}\,
dx
\leq
t{\lambda}^{2d}
\int_{\R^n} {\cal L} f. f\, dx- \frac{n}{2} \ln (e^2\pi t{\lambda}^{2d})\vert\vert f\vert\vert^2_2
-\int_{\R^n} 
N\vert f \vert^2\,dx.
$$
Now it is clear that by
setting $s=t{\lambda}^{2d}$ the previous inequality reduces to
(\ref{lsgrus}), i.e.~(\ref{lsgrus}) is stable.
\\

$\bullet$
{\bf   A special Grushin operator in dimension 2}. 
The following Grushin  operator on $\Ri^2$
has been extensively studied 
$$
{\cal L}=-\left(\frac{\partial ^2}{\partial  x^2}
+  4x^2 \frac{\partial ^2}{\partial  y^2}
\right).
$$
For instance, see 
\cite{Beckner} for the Sobolev inequality which can be related to our subject.
The associated vector fields $Y_1=\frac{\partial }{\partial  x}$ and $Y_2=2x \frac{\partial }{\partial  y}$
generate a 2-step nilpotent Lie algebra since $[Y_1,Y_2]=2\frac{\partial }{\partial  y}$ and the other brackets are zero. Moreover, the family $\{X_1,X_2\}$ satisfies the H\"ormander condition hence ${\cal L}$ is hypoelliptic.
\\

Denoting the Lebesgue measure on $\Ri^{2}$  by  
$d\mu$, our
modified  log-Sobolev inequality reads as follows: for any $t>0$,
\begin{equation}\label{eq:loglog}
  \int_{\R^2} f^2\,\ln \frac{f^2}{\vert\vert f\vert\vert^2_2}\,
d\mu
\leq
t\int_{\R^2} {\cal L} f. f\, d\mu+ M(t)\vert\vert f\vert\vert^2_2 
-  \int_{\R^2} \ln \vert x \vert\, f^2\, d\mu
\end{equation}
with $M(t):=- \ln (2 \pi e^2 t)$.
Note that:
\begin{enumerate}
\item
If $ \int_{\R^2} \ln \vert x \vert  \vert f\vert^2\, d\mu=0$, the inequality \eqref{eq:loglog}
becomes similar to the log-Sobolev inequality on $\Ri^2$ for the usual Laplacian.
\item
The operator $\mathcal{L}$
arises as the induced sub-Laplacian from $X^2+Y^2$ on the quotient of the Heisenberg group of dimension $3$  by the closed non normal subgroup generated by the vector field $X$; here $X,Y,Z$ are the usual left-invariant vector fields of the Heisenberg group. This quotient can be identified 
with $\Ri^2$ and has the structure of a  nilmanifold 
\cite[p.265]{ccfi}

\item
The super log-Sobolev inequality \eqref{eq:loglog}  with fixed $t_0>0$ such that $M(t_0)=0$ 
is a Gross type inequality 
satisfied by the operator 
$A= t_0 \,{\cal L}+V$ where $V$ is the singular potential $V(x,y)=-\ln \vert x\vert$. 
In fact, we can produce many examples of this type with the basic super log-Sobolev inequality (\ref{grossplat})
in the  Euclidean space with the Lebesgue measure, see  for instance \eqref{lsgrus}.
\item
The log-Sobolev inequality \eqref{eq:loglog} is stable by the dilation $H_{\lambda}(x,y)=(\lambda x, \lambda^2 y)$ in the sense given at the beginning of the section.
\end{enumerate}

In Section \ref{ultrabound}, we shall deduce from \eqref{eq:loglog}
the following  log-Sobolev inequality:
$$
  \int_{\R^2} f^2\,\ln \frac{f^2}{\vert\vert f\vert\vert^2_2}\,
dxdy
\leq
t\int_{\R^2} {\cal L} f. f\, dxdy+ M_1(t)\vert\vert f\vert\vert^2_2 
$$
where
$M_1(t)=\ln (c_0 \, t^{-3/2})$.
This will be obtained by estimating 
the rightmost expression
of \eqref{eq:loglog} via a 
version of
Hardy's inequality valid for logarithmic weights, see Lemma \ref{rosengr1}.
The same method can be extended to operators of the form
 $$
{\cal L}=-\left( \frac{\partial ^2}{\partial  x^2}+  (x^{2})^{m} \frac{\partial ^2}{\partial  y^2} \right),\qquad
m>0. 
$$

In \cite[Section 4]{wa2},   log-Sobolev  with parameter or  Gross type inequalities 
are proved  for the quadratic form
$
\int \Gamma(f)\, d\mu_V,
$
where $\Gamma(f)=\vert \frac{\partial f}{\partial  x}\vert^2+  x^{2m} \vert \frac{\partial f}{\partial  y}\vert^2$, $m\in \Ni$,
 and $d\mu_V=e^{-V}\, dx$ with 
 some potential $V$ related to quasi-metrics.
These examples can be used as a factor  $X_i$ in our semi-direct product theory.

$\bullet$
{\bf  A very degenerate Grushin operator.}
In \cite{f-l} the authors study a very degenerate 
diffusion  on $\Ri^2$ of the form
$$
{\cal L}=-\frac{1}{2}\frac{\partial ^2}{\partial  x^2} -  \frac{1}{2}g^2(x)\frac{\partial ^2}{\partial  y^2
}
$$
where $g(x)>c^{\prime}_1\exp(-\frac{c^{\prime}_2}{\vert x\vert^{\alpha}})$ 
with $\alpha \in [0,2[$. In particular when $\alpha\in(0,2)$
they prove the following pointwise estimate of the
heat kernel $p_{t}(x,y)$ associated with $\mathcal{L}$:
for any  ${\gamma^{\prime}}>\gamma:=\frac{\alpha}{2-\alpha}$,
\begin{equation}\label{heat}
p_t(x,y)\leq \frac{c_1}{t}
\exp\left(\frac{c_2}{t^{\gamma^{\prime}}}\right), \quad 0<t<1.
\end{equation}
Directly from this estimate one obtains the following
log-Sobolev inequality valid for $0<t<1$ :
$$
\int_{\R^2} f^2\,\ln \frac{f^2}{\vert\vert f\vert\vert^2_2}\,
d\mu
\leq
t\int_{\R^2} {\cal L} f. f\, d\mu+ M(t)\vert\vert f\vert\vert^2_2
$$
with $M(t)= c_2 \, 2^{\gamma'} t^{-\gamma'}- \ln  t+ \ln(2\, c_1)$;
here $d\mu=dxdy$ is the standard Lebesgue measure.
(Notice that in \cite{f-l} the exponent $\gamma'$ is not clearly 
defined when $t$ is large).   

We can apply our results in this situation and reverse the
process, proving a log-Sobolev inequality with parameter first,
and deducing as a corollary an upper bound on the heat kernel.
Indeed, from Theorem \ref{main} we obtain directly
for any $t>0$
  $$
  \int_{\R^2} f^2\,\ln \frac{f^2}{\vert\vert f\vert\vert^2_2}\,
d\mu
\leq
t\int_{\R^2} {\cal L}  f. f\, d\mu- \ln( \pi e^2c^{\prime}_12^{-1} t)\vert\vert f\vert\vert^2_2
+c^{\prime}_2\int_{\R^2} \frac{1\;}{\vert x\vert^{\alpha}} \vert f \vert^2\,d\mu.
$$
In Section 
\ref{appendix}, we shall prove again a Hardy type inequality 
which can be applied 
to the rightmost expression of this inequality, see Lemma \ref{rosengr2}.
This will allow to improve the above bound on $M(t)$ to
$\gamma'=\gamma=\frac{\alpha}{2-\alpha}$, and also for all
positive $t>0$, provided $\alpha\in(0,1)$.
Consequently (\ref{heat}) holds true
with  $\gamma'=\gamma:=\frac{\alpha}{2-\alpha}$ and 
for any $t>0$ improving the result of \cite{f-l}  in
the case $0<\alpha<1$. See Section \ref{ultrabound} for the full details.
Notice however that our method fails when $\alpha>1$, indeed
for any smooth function $f(x,y)$ which does not vanish near the
origin one has
$
\int_{\R^2} \frac{1\;}{\vert x\vert^{\alpha}} 
\vert f(x,y)\vert^2\,d\mu=\infty$ 
and this is an obstruction for the validity of Hardy's inequality.
In connection with this, 
it might be interesting to notice that the operator $\mathcal{L}$
is hypoelliptic when $\alpha<1$, but it is not hypoelliptic
in general when $\alpha\ge1$, and counterexamples depend on
the behaviour of $g$ near the origin.
\\

\item
{\em Semiproduct of electromagnetic Schr\"odinger operators.}

On $\mathbb{R}^{m}_{x}\times \mathbb{R}^{n}_{y}$,
consider the following operator
\begin{equation}\label{eq:potential}
  \mathcal{L}= H + a^2(x) \, K,
\end{equation}
where $H$ and $K$ are  two \emph{electromagnetic}
Schr\"odinger operators of the form
$$ H=   (i \nabla_{x}-A(x))^{2}+V(x),\qquad
  K=  (i \nabla_{y}-B(y))^{2}+W(y)
$$  with suitable potentials $V$ and $W$ and $A:\mathbb{R}^{m}\to \mathbb{R}^{m}$, $B:\mathbb{R}^{n}\to \mathbb{R}^{n}$, satisfying the assumptions of Proposition {\red \ref{pro:heatk} } and $a^2(x) >0$.
We recall that as a consequence of Proposition {\red \ref{pro:heatk}}, 
we have proved the logarithmic
Sobolev inequality  with parameter
$$  \int_{{\R^m}}f^{2}\ln \frac{f^{2}}{\|f\|^{2}_{2}}dx
  \le2t_0 (Hf,f)_{L^{2}}+M_0(t_{0})\|f\|^{2}_{2}$$
where
$$  M_0(t_0)=\ln\left(
    \frac{(2\pi t_0)^{-m/4}}{(1-  \|V_{-}\|_{K}/c_{m})^{1/2}}
  \right)
$$
and 
  $$\int_{{\R^n}}f^{2}\ln \frac{f^{2}}{\|f\|^{2}_{2}}dx
  \le2t_1 (Kf,f)_{L^{2}}+M_1(t_{1})\|f\|^{2}_{2}$$
where
$$  M_1(t_1)=\ln\left(
    \frac{(2\pi t_1)^{-n/4}}{(1- \|W_{-}\|_{K}/c_{n})^{1/2}}
  \right)
$$
(see \eqref{eq:electromaghlogsob}).
Thus, applying Theorem \ref{main}, we obtain
the following logarithmic Sobolev inequality with parameter: 
for any $h(x,y) \in \mathcal{D}(\mathcal{L})$ we have
 $$
  \int h^2\,\ln \frac{h^2}{\vert\vert h\vert\vert^2_2}\,
dxdy
\leq 
t_0 \!\! \int { H} h. h\, dxdy\ +\ 
t_1 \!\! \int a^2(x){ K} h. h\, dxdy\  + \!
 \int M(t) h^2 dxdy,
$$
 where $$M(t) = M_0(t_0) + M_1(t_1 a^2(x)).$$
It is clear that this result
can be generalized to the product of a finite number of spaces 
and to more general operators. 


\item
{\em Operators on metabelian groups and Hyperbolic spaces}.

Let $\Hi^{Q+1}=\Ri\times_t \Ri^Q$ be the semi-direct product of $\Ri$ with $\Ri^Q$ defined by
the action $t_a(n):=e^{-a}n$ for $n\in \Ri^Q$ and $a\in \Ri$.  The product law is given by
$$
(a_1,n_1)(a_2,n_2)= (a_1+a_2, e^{a_2}n_1+n_2).
$$
Consider the right-invariant vector fields
\begin{equation*}
  Y_{0}=\frac{\partial}{\partial a},\qquad
  Y_{i}=e^{a}\frac{\partial}{\partial n_{i}},\qquad
  i=1,\dots Q.
\end{equation*}
We define the full Laplacian by
$$
\mathcal{L}=-\sum_{i=0}^Q Y_i^2
$$
and we consider the action of $\mathcal{L}$ on 
$L^2(G)$ with respect to 
the left-invariant measure $dx=dadn$  on $G$ (which coincides with the 
Lebesgue measure).

The vector fields $\{Y_0,Y_1,\dots,Y_Q\}$
generate a 2-step solvable Lie algebra  since $[Y_0,Y_i]=Y_i$, 
$[Y_i,Y_j]=0$, $i,j=1,\dots,Q$,
but not nilpotent because $ad^k(Y_0)(Y_i) 
\\  = Y_i$ for any $k\in\Ni$. Moreover, the family 
$\{Y_0,Y_1,\dots,Y_Q\}$  satisfies the H\"ormander  condition  so    ${\cal L}$ is hypoelliptic.

By a direct application of the main result Theorem \ref{main} we obtain:
for all $t>0$
$$
  \int_{\H^{Q+1}} f^2\,\ln \frac{f^2}{\vert\vert f\vert\vert^2_2}\,
dx
\leq
t\int_{\H^{Q+1}} {\mathcal L} f. f\, dx+ M(t)\vert\vert f\vert\vert^2_2 -Q
  \int_{\H^{Q+1}}  a \vert f(a,n)\vert^2\, dadn
$$
where $M(t):=-\frac{(Q+1)}{2}\ln (e^2\pi t)$.

See \cite{h} for other operators in the same
class to which we can apply our theory.
Note that if for all $n$ the function 
$a\mapsto a\vert f(a,n)\vert^2$ is odd  and integrable, 
then the rightmost expression   
is zero and the super log-Sobolev inequality is formally identical 
to the same inequality on $\Ri^{Q+1}$ for the usual Laplacian.  
\\

Differently from  the   cases of  the Grushin operator and 
the very degenerate  Grushin type  operator   studied above, 
a  Hardy type inequality cannot hold for the term
$$
- \int_{\R^{Q+1}}  a \vert f(a,n)\vert^2\, dadn
$$
as it is easy to prove.
Indeed, suppose that there  are $t>0$ and $R(t)\in \Ri$ such that,  for all  $f\in C_0^{\infty}(\Ri^{Q+1})$,  
$$
- \int_{\R^{Q+1}} a \vert f\vert^2\, dadn
\leq
t\int_{\R^{Q+1}}   \left(
\left| \frac{\partial f}{\partial a}\right|^2
+
e^{2a}\vert \nabla_n f \vert^2 
\right) 
\, dadn +R(t) \int_{\R^{Q+1}}    \vert f \vert^2\, dadn.
$$
Fix   $g\in C_0^{\infty}(\Ri^{Q+1})$, $g\neq 0$. Set $f(a,n)=g(a+k,n)$ with $k\in\Ni$ in the preceding inequality
and  make the change of variables 
$u=a+k$, we  get 
$$
- \int_{\R^{Q+1}}   u \vert f \vert^2\, dudn +k \int_{\R^{Q+1}}   \vert f \vert^2\, dudn
\leq
t\int_{\R^{Q+1}}   \vert \frac{\partial f}{\partial u}   \vert^2\, dudn
\,+\, 
$$
$$
e^{-2k}
t\int_{\R^{Q+1}}  
e^{2u}\vert \nabla_n f  \vert^2
\, dudn
+
R(t) \int_{\R^{Q+1}}    \vert f  \vert^2\, dudn,
$$
which cannot hold when $k$ goes to infinity, and we
get a contradiction as claimed.
\\

\item
{\em Products of Damek-Ricci spaces.}

Consider a Riemannian manifold $X$ which is the product
of two Damek-Ricci spaces $S_{j}$, $j=0,1$:
\begin{equation*}
  X= S_0 \times S_1
\end{equation*}
and an operator $\mathcal{L}$ on $X$
defined as
\begin{equation}\label{eq:damek-ricci}
  \mathcal{L}= \mathcal{ L}_0 + a^2(x_0) \, \mathcal{ L}_1, 
\end{equation}
where $\mathcal{L}_j= -\Delta_{S_j} $ is 
the Laplace-Beltrami  operator on $S_j$ and $a^2(x_0) >0$
is a function depending only on the variable $x_{0}\in S_{0}$.
Denote by $Q_{j}$ the homogeneous dimension of $S_{j}$.
In Example 4 of Section \ref{sec-lsip} we have proved the logarithmic
Sobolev inequalities with parameter, for $j=0,1$,
$$  
\int_{{S_j}}f^{2}\ln \frac{f^{2}}{\|f\|^{2}_{2}} \, dx_j
\le
t_j (\mathcal{L}_j f,f)_{L^{2}}+M_j(t_{j})\|f\|^{2}_{2}$$
$$  
M_j(t)=  \begin{cases}
\ln(  C_{j} 2^{n_j/2})
-\frac{n_j}{2} \ln{t} -\frac{Q_{j}^2 t}{8}
&\text{if}\hspace{3mm}0 <t\!\le\!1\!\,,\\
\ln(C_{j} 2^{3/2} )
{-\frac{3}{2}} \ln{t} -\frac{Q_{j}^2 t}{8}
&\text{if}\hspace{3mm}t\!>\!1.\\
\end{cases}
$$
Thus, applying Theorem \ref{main}, we have
the following logarithmic Sobolev inequality with parameters 
on $X=S_{0}\times S_{1}$: for any 
$h(x_0,x_1) \in \mathcal{D}(\mathcal{L})$,
 $$
  \int_{X} h^2\,\ln \frac{h^2}{\vert\vert h\vert\vert^2_2}\,
dx_0dx_1
\leq
t_0 \!\! \int_{X} { \mathcal{L}_0} h. h\, dx_0dx_1+
t_1 \!\! \int_{X} a^2(x_0){ \mathcal{L}_1} h. h\, dx_0dx_1
$$
$$ + \!\!\int_{X} M(t) h^2 dx_0dx_1,
$$
where $$M(t) = M_0(t_0) + M_1(t_1 a^2(x)).$$

%
%

\item
{\em Change of variables}
 
 The class of semi-direct product operators is not stable
 by change of variables. However, we can take advantage of this fact
 in order to prove super log-Sobolev inequalities for operators which do not belong to this class,
 as we shall see in the following example. Let us  start with a formal  proposition.
 
 \begin{pro} \label{prop:formal}
 Let $M_1,M_{2}$ be two differentiable manifolds, $\mu_i$  a measure on $M_i$ and ${\mathcal  L_i}$ an operator on $M_i$  for $i=1,2$.
 Let $\Phi :M_1\rightarrow M_2$ be a $C^{\infty}$ diffeomorphism. We assume that
  $$
  \Phi_*({\mathcal L_1})={\mathcal L_2},\qquad \Phi^*(\mu_1)=\mu_2,
  $$
and that $(M_1,{\mathcal L_1},\mu_1) $ satisfies a super log-Sobolev inequality of the following  form: for any $f\in C_0^{\infty}(M_1)$ and any $t>0$,
\begin{equation}\label{slsformal1}
  \int_{M_1} f^2\,\ln \frac{f^2}{\vert\vert f\vert\vert^2_2}\,
d\mu_1
\leq  t\, \int_{M_1} {\mathcal L_1}f.f\, d\mu_1 +  \int_{M_1} B_1(t,x)f^2(x)\,d\mu_1(x).
\end{equation}
Then $(M_2,{\mathcal L_2},\mu_2) $ satisfies a super log-Sobolev inequality of the same form, namely: for any $g\in C_0^{\infty}(M_2)$, and any $t>0$,
\begin{equation}\label{slsformal2}
  \int_{M_2} g^2\,\ln \frac{g^2}{\vert\vert g\vert\vert^2_2}\,
d\mu_2
\leq  t\, \int_{M_2} {\mathcal  L_2}g.g\, d\mu_2 +  \int_{M_2} B_2(t,y)g^2(y)\,d\mu_2(y)
\end{equation}
with $B_2(t,\Phi(x))=B_1(t,x)$, $ x\in M_1$.
 \end{pro}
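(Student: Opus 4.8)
The plan is to transport the log-Sobolev inequality \eqref{slsformal1} from $M_1$ to $M_2$ by pulling back test functions through the diffeomorphism $\Phi$. Given $g\in C_0^{\infty}(M_2)$, set $f=g\circ\Phi\in C_0^{\infty}(M_1)$. The proof amounts to checking that each of the three terms appearing in \eqref{slsformal1}, when written for this $f$, matches the corresponding term in \eqref{slsformal2}; the hypotheses $\Phi^*(\mu_1)=\mu_2$ and $\Phi_*(\mathcal L_1)=\mathcal L_2$ are precisely what make this work.

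First I would treat the $L^2$-norm and the entropy term. The condition $\Phi^*(\mu_1)=\mu_2$ means that for every nonnegative measurable $\phi$ on $M_2$ one has $\int_{M_2}\phi\,d\mu_2=\int_{M_1}(\phi\circ\Phi)\,d\mu_1$; applying this with $\phi=g^2$ gives $\|f\|_{L^2(\mu_1)}=\|g\|_{L^2(\mu_2)}$, and applying it with $\phi=g^2\ln(g^2/\|g\|_2^2)$ gives
$$
\int_{M_1}f^2\ln\frac{f^2}{\|f\|_2^2}\,d\mu_1=\int_{M_2}g^2\ln\frac{g^2}{\|g\|_2^2}\,d\mu_2.
$$
Next, the Dirichlet-form term: the relation $\Phi_*(\mathcal L_1)=\mathcal L_2$ is exactly the statement that $(\mathcal L_1(g\circ\Phi))=(\mathcal L_2 g)\circ\Phi$, so using $\Phi^*(\mu_1)=\mu_2$ again,
$$
\int_{M_1}(\mathcal L_1 f)\cdot f\,d\mu_1
=\int_{M_1}\big((\mathcal L_2 g)\circ\Phi\big)(g\circ\Phi)\,d\mu_1
=\int_{M_2}(\mathcal L_2 g)\cdot g\,d\mu_2.
$$
Finally, the remainder term transforms by the same change-of-variables identity applied to $\phi(y)=B_2(t,y)g^2(y)$, together with the defining relation $B_2(t,\Phi(x))=B_1(t,x)$:
$$
\int_{M_1}B_1(t,x)f^2(x)\,d\mu_1(x)
=\int_{M_1}B_2(t,\Phi(x))g^2(\Phi(x))\,d\mu_1(x)
=\int_{M_2}B_2(t,y)g^2(y)\,d\mu_2(y).
$$
Substituting these three identities into \eqref{slsformal1} written for $f=g\circ\Phi$ yields \eqref{slsformal2} for arbitrary $g\in C_0^{\infty}(M_2)$ and $t>0$.

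I expect no genuine obstacle here, since the statement is explicitly labelled \emph{formal}: the content is entirely bookkeeping once the pushforward/pullback conventions for $\mathcal L_i$ and $\mu_i$ are fixed. The only point requiring mild care is consistency of notation — making sure that ``$\Phi_*(\mathcal L_1)=\mathcal L_2$'' is understood as the intertwining $\mathcal L_1(g\circ\Phi)=(\mathcal L_2 g)\circ\Phi$ (equivalently $\mathcal L_2(h\circ\Phi^{-1})=(\mathcal L_1 h)\circ\Phi^{-1}$), and that $\Phi$ being a $C^\infty$ diffeomorphism guarantees $g\circ\Phi\in C_0^{\infty}(M_1)$ so that \eqref{slsformal1} may legitimately be applied to it.
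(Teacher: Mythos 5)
Your proposal is correct and follows exactly the paper's argument: substitute $f=g\circ\Phi$ into \eqref{slsformal1} and use the stated meanings of $\Phi_*({\mathcal L_1})={\mathcal L_2}$ and $\Phi^*(\mu_1)=\mu_2$ to convert each term; the paper's own proof is just a one-line version of this. Your expanded bookkeeping of the three terms is a faithful elaboration, not a different route.
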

In the above statement,
  $  \Phi_*({\mathcal L_1})={\mathcal  L_2}$  and  $\Phi^*(\mu_1)=\mu_2$ mean  respectively:  for any $g\in C_0^{\infty}(M_2)$,
 $$
 ({\mathcal  L_2} \,g) \circ  \Phi= {\mathcal L_1} (g  \circ \Phi)
 $$  and
 $$
 \int_{M_1} (g \circ \Phi)\, d\mu_1= \int_{M_2} g \, d\mu_2.
 $$

 {\bf Proof:}
 Let $g\in C_0^{\infty}(M_2)$ and set $f=g \circ \Phi$ so that $f\in C_0^{\infty}(M_1)$.
Putting  this   function $f$
 in (\ref{slsformal1}) we deduce immediately 
 (\ref{slsformal2}).
 \\
 
 
To illustrate the interest of Proposition  \ref{prop:formal}, we apply it  to  the operator
$$
{\mathcal  L_1}f=- \frac{\partial^2 f}{ \partial^2{a}}-  e^{2a}\sum_{i=1}^Q \frac{\partial^2 f}{\partial^2{n_i}}
$$
on the space  $M_1=\Hi^{Q+1}$ with the Lebesgue measure $d\mu_1=dadn$. The change of variables 
$
\Phi:\Ri^{Q+1}\rightarrow (0,+\infty)\times \Ri^Q
$
is  given by $\Phi(a,n)=(e^a,n)=(r,n)$.
Then, starting from
\begin{equation}\label{lsihyper}
\begin{split}
  \int_{\R^{Q+1}} f^2\,\ln \frac{f^2}{\vert\vert f\vert\vert^2_2}\,
dadn
\leq 
 t\, \int_{\R^{Q+1}} {\mathcal L_1}f.f\, & dadn -Q  \int_{\R^{Q+1}} a f^2(a,n)\,dadn 
    \\
&-\frac{Q+1}{2}\ln (\pi e^2 t)\vert\vert f\vert\vert^2_2,
\end{split}
\end{equation}
we get for the transformed operator
$$
{\mathcal  L_2}  =-r^2\left( \frac{\partial^2 }{ \partial^2{r}}+\sum_{i=1}^Q \frac{\partial^2 }{\partial^2{n_i}}\right)
$$
the modified super log-Sobolev inequality
\begin{equation*}
\begin{split}
  \int_{(0,\infty)\times \R^{Q}} g^2\,\ln \frac{g^2}{\vert\vert g\vert\vert^2_2}\,
&\frac{dr}{r}dn
\leq 
 t \!\!  \int_{(0,\infty)\times\R^{Q}} {\mathcal  L_2}g.g\, \frac{dr}{r}dn
    \\
  &-Q  \!\!\int_{(0,\infty)\times\R^{Q}} ( \ln r)  g^2(r,n)\,
  \frac{dr}{r}dn
    -\frac{Q+1}{2}\ln (\pi e^2 t)\vert\vert g\vert\vert^2_2
\end{split}
\end{equation*}
where $d\mu_2(r,n)=\frac{dr}{r}dn$.

Thus we see that, although
the transformed operator is not a
a semidirect product and we can not apply our theory directly,
nevertheless we are able to
prove  a super log-Sobolev inequality.
The modified super log-Sobolev inequality just proved, 
when $Q=1$, can be interpreted as a  modified super log-Sobolev inequality
for the Laplace-Beltrami operator $\Delta=-{\mathcal L}_2$  on the Poincar\'e upper half-plane ${\bf H}$ with respect to the weighted Riemannian measure 
$d\mu(r,n)=\omega(r)r^{-2}drdn$ where $r^{-2}drdn$ is  the Riemannian measure on ${\bf H}$  and the weight 
$\omega(r)=r$. 
\\

Another interesting application can be
obtained by using as change of variables the
family of diffeomorphisms given by the
left-invariant actions
$\Phi_{g}(h)=gh$. This idea produces
super log-Sobolev inequalities for a whole family of 
second order differential operators with drift parametrized 
by elements of the group.

For simplicity we focus on the case $Q=1$.
Then, with the notations
$g=(a_1,n_1)$ and $h=(a,n)$, 
we can write explicitly $\Phi_g(h)= (a_1+a,e^an_1+n)$.
Replacing $f$ by $f\,\circ\,\Phi_g$ in the super log-Sobolev 
inequality and recalling that the measure is left-invariant, we 
arrive at
\begin{equation*}
\begin{split}
  \int_{\R^{2}} f^2\,\ln \frac{f^2}{\vert\vert f\vert\vert^2_2}\,
dadn
\leq 
t\!\! \int_{\R^{2}} {\mathcal L}^{a_1,n_1}&f.f\, dadn
-
\int_{\R^{2}} a f^2(a,n)\,dadn
\\
& + 
\left( a_1  -
\ln (e^2\pi t)\right) \vert\vert f\vert\vert^2_2
\end{split}
\end{equation*}
where
$$
{\mathcal L}^{a_1,n_1}= \
Y_0^2+(n_1^2+1)e^{-2a_1}Y_1^2
+
2n_1e^{-a_1}Y_1Y_0
+
n_1e^{-a_1} Y_1
$$
and 
$Y_0= \partial_a$, $Y_1=e^a  \partial_n$
and
 $\partial_n$ is the derivative with respect to the  variable $n$.

Each operator ${\mathcal L}^{a_1,n_1}$ is a  symmetric second-order linear differential operator  with drift on $L^2(dadn)$.
Note that the super log-Sobolev inequality 
\eqref{lsihyper} is invariant by the right-action of the group, 
thus if we use  instead the right-invariant actions,
we do not obtain new inequalities. To check invariance
we notice that
the vector fields are right-invariant, the Jacobian  determinant equals to the constant $1/\Delta(g)=e^{-Qa_1}$ where  the modular function $\Delta$  satisfies  the relation
$\Delta(g)\int f^2(xg)\,dx =\int f^2(x)\, dx$;
then we use the fact that $\ln \Delta$ is linear and the relation
$-N+\ln \Delta=0$ with $N$ as in  \eqref{lsgrus}.
\end{enumerate}

\section{Application to ultracontractive bounds}\label{ultrabound}
It is well known that from the super log-Sobolev inequality one can deduce 
suitable ultracontractive bounds and estimates on the heat kernel. 
Moreover, the Gross inequality implies the concentration phenomenon 
when the reference measure is  a probability measure (but we shall not go that direction in this paper).
In this section, we focus on the first application starting from our modified log-Sobolev inequality
(\ref{result1}) in some simple cases. But we need an additional tool, namely  Hardy type inequalities.
The general statement  is the following:

\begin{theo}\label{ultrarosen}
Let $(X,\mu)$ be a manifold with a positive measure $\mu$.
Let 
$A$ be a nonnegative selfadjoint operator on $L^2(X,\mu)$ which generates a semigroup $e^{-tA}$.
 Assume that  
 \begin{enumerate}
 \item
 $C^{\infty}_0(X)\subset {\cal D}(A)$.
 \item
For any $t>0$ and any $h\in C^{\infty}_0(X)$,
we have the following modified log-Sobolev inequality
\begin{equation}\label{hyplsi}
\int_{X} h^2\ln \frac{h^2}{\vert\vert h\vert\vert_2^2}\,
d\mu
\leq
t\,(Ah,h)+ \ln \left( {c_0} \, {t^{-n/2}}\right)
\vert \vert h\vert\vert_2^2
-\int_X  N h^2\, d\mu
\end{equation}
for some $c_0, n>0$ and some function $N:X\rightarrow \Ri$.
\item
For any  $t>0$ and any $h\in C^{\infty}_0(X)$,
we have a Hardy type inequality
\begin{equation}\label{rosen}
-\int_X  N h^2 \, d\mu \leq
t\,
(Ah,h)+g(t)\vert \vert h\vert\vert_2^2
\end{equation}
for some function $g:(0,+\infty)\rightarrow \Ri$.
 \end{enumerate}
 Then 
$$
\vert\vert T_t \vert\vert_{2\rightarrow \infty}
\leq
 c_1 \, t^{-n/4}\,
e^{M(t)} 
$$
with
$\displaystyle M(t)=(2t)^{-1} 
\int_0^t g(\varepsilon)\,d\varepsilon$
and some constant $c_1>0$.
\end{theo}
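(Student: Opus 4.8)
The plan is to eliminate the potential term $-\int_X N h^2\,d\mu$ from \eqref{hyplsi} by means of the Hardy type inequality \eqref{rosen}, obtaining a genuine super log-Sobolev inequality, and then to feed the latter into the classical Rosen--Davies ultracontractivity machinery. For the \emph{elimination step} I would fix $t>0$ and $h\in C^\infty_0(X)$ and combine \eqref{hyplsi} with \eqref{rosen} using one and the same parameter $t$: substituting the bound on $-\int_X Nh^2\,d\mu$ furnished by \eqref{rosen} into \eqref{hyplsi} gives, for every $t>0$ and every $h\in C^\infty_0(X)$,
\[
\int_X h^2\ln\frac{h^2}{\|h\|_2^2}\,d\mu
\le 2t\,(Ah,h)+\bigl(\ln(c_0 t^{-n/2})+g(t)\bigr)\|h\|_2^2 ,
\]
that is, a clean super log-Sobolev inequality with energy coefficient $2t$ and additive term $B(t):=\ln(c_0 t^{-n/2})+g(t)$. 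Using hypothesis~1, namely $C^\infty_0(X)\subset\mathcal D(A)$, together with a routine closure argument one extends this inequality from $C^\infty_0(X)$ to the whole quadratic form domain of $A$.

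Next I would invoke \emph{Rosen's lemma} in the form of Davies' theorem (see \cite[Th.\ 2.2.3]{d}, \cite[Eq.\ 4.4.2]{d} and \cite{bcl}): a nonnegative self-adjoint operator satisfying an inequality of exactly the above shape for all $t>0$ is ultracontractive, with
\[
\|T_t\|_{2\to\infty}\le\exp\!\Bigl(\tfrac{1}{2t}\int_0^t B(s)\,ds\Bigr).
\]
For completeness one recalls the mechanism: fixing a target time $t$, one chooses an increasing exponent $p=p(s)$ on $[0,t]$ with $p(0)=2$ and $p(t)=\infty$, sets $u(s)=\log\|T_s f\|_{p(s)}$, differentiates in $s$, estimates the entropy term produced by $p'(s)$ through the clean inequality applied to $v=|T_s f|^{p(s)/2}$ with the free parameter chosen so that the two energy contributions cancel, and integrates the resulting differential inequality for $u$.

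The remaining step is pure bookkeeping. Since $\int_0^t\ln(c_0 s^{-n/2})\,ds=t\ln c_0-\tfrac n2\,(t\ln t-t)$, one finds
\[
\tfrac{1}{2t}\int_0^t B(s)\,ds
=\bigl(\tfrac12\ln c_0+\tfrac n4-\tfrac n4\ln t\bigr)+\tfrac{1}{2t}\int_0^t g(\varepsilon)\,d\varepsilon
=\ln\bigl(c_1 t^{-n/4}\bigr)+M(t),
\]
with $c_1:=c_0^{1/2}e^{n/4}$ and $M(t)=(2t)^{-1}\int_0^t g(\varepsilon)\,d\varepsilon$, which is exactly the asserted bound.

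I expect the delicate point to be not the reduction, which is immediate, but the matching of the combined inequality to the precise hypotheses of the ultracontractivity lemma: one must check integrability of $B$ near $s=0$ (the logarithmic part integrates, and $\int_0^\delta|g|<\infty$ has to be assumed --- it is needed in any case for $M(t)$ to be finite), handle the passage from $C^\infty_0(X)$ to the functions $|T_s f|^{p/2}$ appearing in the Gross--Rosen computation (via positivity of $T_s$ applied to $|f|$ and density), and, in case $g$ is not monotone, use the version of Rosen's lemma that dispenses with monotonicity of the additive term (as in \cite{bcl}), replacing $B$ by its nonincreasing envelope --- which only changes the value of $c_1$.
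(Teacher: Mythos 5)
Your proposal is correct and follows exactly the paper's argument: the paper likewise adds \eqref{hyplsi} and \eqref{rosen} with the same parameter $t$ to obtain $\int_X h^2\ln\frac{|h|}{\|h\|_2}\,d\mu\le t\,(Ah,h)+\bigl(\ln(c_0^{1/2}t^{-n/4})+\tfrac12 g(t)\bigr)\|h\|_2^2$ and then cites Corollary 2.2.8 of Davies, which is your inequality divided by two. Your extra remarks on integrability of $B$ near $0$ and on the non-monotone case are sensible housekeeping that the paper leaves implicit, but the route is the same.
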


Note that
what we call a Hardy type inequality  
(\ref{rosen})  is  similar to 
the  assumption   (4.4.2) in Rosen's Lemma 4.4.1 of \cite{d} 
(provided we choose  $N=\ln \Phi$).
 The purpose of such a condition is to get
a true log-Sobolev inequality with parameter which allows us to apply Corollary 2.2.8 of \cite {d}.
\\

{\bf Proof}:
From  (\ref{hyplsi}) and (\ref{rosen}), we get  for any 
 $t>0$ and any $h\in C^{\infty}_0(X)$,
$$
\int_{X} h^2\ln \frac{\vert h \vert \; }{\vert\vert h\vert\vert_2}\,
d\mu
\leq
t\, (Ah,h)+ \left(\ln (c_0^{1/2}\, {t^{-n/4}})+2^{-1} g(t)\right)
\vert \vert h\vert\vert_2^2.
$$ 
Now it is sufficient to apply
Corollary 2.2.8 of \cite {d} to conclude the proof.
\\

We give two applications of this result. 

\begin{enumerate}
\item
Let $m>0$. 
The generalized Grushin operator on $\Ri^2$
$$
{\cal L} =- \left(\frac{\partial}{\partial{x}}\right)^2 -
(x^{2})^{m}\left(\frac{\partial}{\partial{y}}\right)^2
$$ 
which is a particular  case of  Example 1 of Section \ref{appli}, satisfies
the inequality
$$
\int_{\R^2} f^2\ln \frac{f^2}{\vert\vert f\vert\vert^2} \,d\mu
\leq
t
\vert\vert \nabla_{\cal L} f\vert\vert^2_2
-\ln(\pi e^2t)\vert\vert  f\vert\vert^2_2
- m
\int_{\R^2} \ln \vert x\vert f^2(x,y)\,d\mu
$$
for any $t>0$ where $d\mu=dxdy$.
 In order to apply Theorem \ref{ultrarosen},
we shall need
the following Hardy type inequality:
\begin{lem}\label{rosengr1}
For any $t>0$,
$$
-\int_{\R^2} \ln \vert x\vert f^2(x,y)\,dxdy
\leq
t
\vert\vert \nabla_{\cal L} f\vert\vert^2_2
+
\frac{1}{2} \ln(2e t^{-1})\vert\vert  f\vert\vert^2_2.
$$
\end{lem}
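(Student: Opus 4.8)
The plan is to reduce the two-dimensional weighted Hardy inequality to a one-dimensional logarithmic Hardy inequality in the $x$-variable, since the weight $-\ln|x|$ and the gradient term $\vert\nabla_{\mathcal L}f\vert^2 = |\partial_x f|^2 + (x^2)^m|\partial_y f|^2$ both dominate their respective $x$-components. First I would fix $y\in\mathbb{R}$ and observe that it suffices to prove, for every $t>0$ and every $u\in C_0^\infty(\mathbb{R})$,
$$
-\int_{\mathbb{R}} \ln|x|\, u^2(x)\,dx
\leq t\int_{\mathbb{R}} |u'(x)|^2\,dx + \tfrac12\ln(2e\,t^{-1})\int_{\mathbb{R}} u^2(x)\,dx,
$$
and then integrate in $y$ (the term $(x^2)^m|\partial_y f|^2$ is simply discarded, being nonnegative). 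So the whole lemma collapses to a scalar inequality.

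Next I would prove that scalar inequality. The clean way is to invoke the flat Gross inequality on $\mathbb{R}$ in the form \eqref{grossplat}/\eqref{eq:logggg}: for all $s>0$ and all $u$,
$$
\int_{\mathbb{R}} u^2\ln\frac{u^2}{\|u\|_2^2}\,dx
\leq s\int_{\mathbb{R}} |u'|^2\,dx - \tfrac12\ln(\pi e^2 s)\,\|u\|_2^2.
$$
Apply this not to $u$ but to the rescaled function $u_\lambda(x)=\lambda^{1/2}u(\lambda x)$ for a parameter $\lambda>0$ to be optimized; after the change of variables the entropy picks up an extra term $-\tfrac12\ln\lambda\cdot\|u\|_2^2$ on one side coming from $\ln|x|$ versus $\ln|\lambda x|$ — more precisely, I would instead directly bound $-\ln|x|\le \tfrac{1}{2\delta}|x|^{-2\delta}\cdot\frac{1}{?}$... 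Actually the more robust route: use the elementary pointwise bound $-\ln|x| \le \tfrac12(x^{-2} \wedge (\text{something}))$ is false near $0$, so instead compare with entropy. The key elementary fact is $-\ln r \le \frac{1}{er}$ is also false; rather, for any $\beta>0$, $-\ln r \le \tfrac{1}{\beta}\big(r^{-\beta}-1\big)\cdot$ — the usable inequality is $-\ln r \le \tfrac{1}{e\beta}r^{-\beta}$ for $r>0$ (since $\max_{r>0} r^\beta(-\ln r) = \tfrac{1}{e\beta}$). Thus
$$
-\int_{\mathbb{R}}\ln|x|\,u^2\,dx \le \frac{1}{e\beta}\int_{\mathbb{R}}|x|^{-\beta}u^2\,dx,
$$
and a standard one-dimensional Hardy inequality controls $\int |x|^{-\beta}u^2$ by $\int|u'|^2$ plus $\|u\|_2^2$ for $\beta<1$; optimizing $\beta$ and the Hardy constant in terms of $t$ should yield exactly the constant $\tfrac12\ln(2e\,t^{-1})$. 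Alternatively, and more in the spirit of the paper, one derives the scalar inequality by feeding $u^2 = e^{-x^2/2}v^2/\sqrt{2\pi}$ into the Gaussian Gross inequality, which converts $-\ln|x|$-type weights directly; I would check both and keep whichever gives the stated sharp constant.

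The main obstacle is tracking the constant so that it comes out as precisely $\tfrac12\ln(2e\,t^{-1})$ rather than merely $C + \tfrac12\ln(1/t)$: this requires choosing the auxiliary parameter ($\lambda$, or $\beta$, or the dilation in the Gaussian reduction) optimally as a function of $t$ and carefully collecting the logarithmic terms $-\tfrac12\ln(\pi e^2 s)$ and the dilation contribution. I expect the scaling to be $s \sim t$ up to an absolute factor, with the $2e$ emerging from $e^2\pi$ combined with the extremizer constant of the pointwise bound; a short computation optimizing a function of the form $t/\lambda^2 + \text{const} + \ln\lambda$ over $\lambda$ finishes it. Everything else is routine: Fubini in $y$, discarding the nonnegative $(x^2)^m|\partial_y f|^2$ term, and density of $C_0^\infty$.
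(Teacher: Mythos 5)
Your reduction of the lemma to one dimension is exactly the paper's: discard the nonnegative term $(x^{2})^{m}\vert\partial_{y}f\vert^{2}$ from $\Gamma$, prove the scalar inequality
$\int_{\mathbb{R}}(-\ln\vert x\vert)u^{2}\,dx\le t\Vert u'\Vert_{2}^{2}+\tfrac12\ln(2et^{-1})\Vert u\Vert_{2}^{2}$,
and integrate in $y$. But that scalar inequality \emph{is} the entire content of the lemma, and none of the routes you sketch actually delivers it. The paper proves it by an elementary identity and integration by parts: $\int_{0}^{1}(-\ln x)\vert u\vert^{2}dx=\int_{0}^{1}\tfrac1x\int_{0}^{x}\vert u\vert^{2}$, split at a parameter $\delta\in(0,1]$, giving
$\int_{0}^{1}(-\ln x)\vert u\vert^{2}\le\vert\ln\delta\vert\,\Vert u\Vert_{L^{2}(0,1)}^{2}+\Vert u\Vert_{L^{2}(0,\delta)}^{2}+\tfrac{2\delta}{e}\Vert u\Vert_{L^{2}(0,\delta)}\Vert u'\Vert_{L^{2}(0,\delta)}$
(Proposition \ref{pro:}); Young's inequality and a dilation $u(x)\mapsto g(x\sqrt{t/c_{1}})$ then produce the bound with constant $\ln\bigl(\sqrt{c_{1}}e^{c_{2}}/\sqrt{t}\bigr)$, and $\sqrt{2e}=\inf_{u>0}u^{-1/2}e^{u}$ gives exactly $\tfrac12\ln(2et^{-1})$. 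Nothing in your plan replaces this step.

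Moreover, your two candidate routes have concrete defects. For the first: the pointwise bound $-\ln r\le\tfrac{1}{e\beta}r^{-\beta}$ is correct, but combining it with a power-weighted Hardy inequality $\int\vert x\vert^{-\beta}u^{2}\le s\Vert u'\Vert_{2}^{2}+c(\beta)\,s^{-\beta/(2-\beta)}\Vert u\Vert_{2}^{2}$ and optimizing over $s$ and $\beta$ (the optimum is $\beta\sim 2/\ln(1/t)$) yields a zero-order coefficient asymptotic to $\tfrac{c(0^{+})}{2}\ln(1/t)$. With the constant $c(\beta)\to 4$ coming from the paper's own Lemma \ref{rosengr2} (or any non-sharp Hardy constant) this is $2\ln(1/t)$ rather than $\tfrac12\ln(1/t)$; since the coefficient $\tfrac12$ is precisely what produces the exponent $t^{-1/2-m/4}$ in Section \ref{ultrabound} (claimed sharp via Ben Arous for $m=1$), this loss is fatal for the intended application, and recovering $\tfrac12$ would require the sharp Hardy constant uniformly as $\beta\to0$, which you do not have and which is essentially as hard as the lemma itself. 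For the second route: substituting $u^{2}=(2\pi)^{-1/2}e^{-x^{2}/2}v^{2}$ into the Gaussian Gross inequality introduces the weight $\ln\bigl((2\pi)^{-1/2}e^{-x^{2}/2}\bigr)$, i.e.\ a \emph{quadratic} weight, not the logarithmic weight $-\ln\vert x\vert$, so it does not ``convert $-\ln\vert x\vert$-type weights directly.'' The missing idea is the integration-by-parts decomposition above, which handles the logarithm exactly instead of dominating it by a power.
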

Actually this lemma will be proved with 
$\vert\vert \frac{\partial f}{\partial x} \vert\vert^2_2$ instead of  $\vert\vert \nabla_{\cal L} f\vert\vert^2_2$;
the above formulation follows from the trivial inequality 
 $\vert\vert \frac{\partial f}{\partial x} \vert\vert^2_2\leq \vert\vert \nabla_{\cal L} f\vert\vert^2_2$.
We postpone the proof to the Appendix, see Section \ref{sub:proof1}.
\\

By Theorem \ref{ultrarosen}, 
the two last inequalities imply that,
for any $t>0$,
$$
\int_{\R^2} f^2\ln  \frac{\vert f\vert \;}{\vert\vert f\vert\vert_2}
\,dxdy
\leq
t
\vert\vert \nabla_{\cal L}f\vert\vert^2_2
+
\ln\left( k \, t^{-\frac{1}{2}-\frac{m}{4}}\right)
\vert\vert  f\vert\vert^2_2
$$
for a suitable $k>0$.
Then by Corollary 2.2.8 in \cite{d}, for any $t>0$,
we deduce
$$
\vert\vert T_t f\vert\vert_{2\rightarrow \infty}
\leq
 k' \, t^{-\frac{1}{2}-\frac{m}{4}}
$$
and this implies the  following uniform bound
for the   heat kernel  $h_t$ of ${\cal L}$:
for any $p=(x,y)\in \Ri^2$ and for any $t>0$,
$$
h_t(p,p)
\leq
 k'' \, t^{-1-\frac{m}{2}}.
 $$
 Such a result is known only in the case $m=1$, at least to the authors' knowledge;
note   that here $m$ is not necessarily an integer.

Moreover, using the
 expression of the heat kernel on the diagonal
 in the case $m=1$ obtained by 
 G.~Ben Arous (see \cite[Eq.(1.20)]{bena}) via a probabilistic proof,
 we can see that our result is sharp.

%

\item
We consider the following very degenerate Grushin operator (see \cite{f-l})  defined on $\Ri^2$ by 
$$
{\cal L} =-\left(\frac{\partial}{\partial{x}}\right)^2 -
\exp\left(-\frac{2\;}{\,\vert x\vert ^{\alpha}}\right)\left(\frac{\partial}{\partial{y}}\right)^2, 
$$ 
(see Example 1 of Section \ref{appli}).
By applying the inequality \eqref{lsgrus}, we get   for any $t>0$,
$$
\int_{\R^2} f^2\ln \frac{f^2}{\vert\vert f\vert\vert^2}\,dxdy
\leq
t
\vert\vert \nabla_{\cal L} f\vert\vert^2_2
-\ln(\pi e^2t)\vert\vert  f\vert\vert^2_2
+
\int_{\R^2} 
\frac{1}{\, \vert x \vert ^{\alpha}}
f^2(x,y)\,dxdy.
$$
The following Hardy type inequality holds
(proved below in Section \ref{sub:proof2}):
\begin{lem} \label{rosengr2}
For any $0<\alpha <1$,
 there exist $c>0$ such that, for any $t>0$,
$$
\int_{\R^2} 
\frac{1}{\, \vert x\vert ^{\alpha}}
f^2(x,y)\,dxdy
\leq
t
\vert\vert \nabla_{\cal L} f\vert\vert^2_2
+
 c \, t^{-b}\vert\vert  f\vert\vert^2_2
$$
with $b=\frac{\alpha}{2-{\alpha}}$ 
(and $b$ is the unique exponent satisfying this inequality above).
\end{lem}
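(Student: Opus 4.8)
The plan is to reduce the two–dimensional statement to a one–parameter family of one–dimensional inequalities and then exploit a dilation argument, which will simultaneously produce the exponent $b=\frac{\alpha}{2-\alpha}$ and explain its uniqueness. First, since the carr\'e du champ of ${\cal L}$ is $\Gamma(f)=|\partial_x f|^2+\exp(-2|x|^{-\alpha})|\partial_y f|^2\ge|\partial_x f|^2$, we have $\|\nabla_{\cal L}f\|_2^2\ge\int_{\R^2}|\partial_x f|^2\,dxdy$, so it is enough to prove
\[
\int_{\R}\frac{g(x)^2}{|x|^{\alpha}}\,dx\;\le\; t\int_{\R}|g'(x)|^2\,dx+c\,t^{-b}\int_{\R}g(x)^2\,dx
\]
for every $g\in C_0^{\infty}(\R)$ and every $t>0$, and then apply this to $x\mapsto f(x,y)$ and integrate in $y$.

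Next I would establish the one–dimensional inequality for the single value $t=1$. Split the integral over $\{|x|\ge1\}$, where $|x|^{-\alpha}\le1$ and the contribution is $\le\|g\|_2^2$, and over $\{|x|<1\}$, where I use the elementary bound $\|g\|_{\infty}^2\le 2\|g\|_2\|g'\|_2$ (which follows from $g(x)^2=2\int_{-\infty}^{x}g(s)g'(s)\,ds$) together with $\int_{|x|<1}|x|^{-\alpha}\,dx=\frac{2}{1-\alpha}$. This is the only place where the hypothesis $\alpha<1$ is used: it guarantees local integrability of the weight at the origin — for $\alpha\ge1$ the left-hand side is already infinite for $f$ not vanishing near $\{x=0\}$, as the paper observes, so the inequality genuinely fails there. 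Adding the two pieces and absorbing the cross term $\|g\|_2\|g'\|_2$ by Young's inequality gives $\int_{\R}|x|^{-\alpha}g^2\,dx\le\|g'\|_2^2+c_0\|g\|_2^2$ for an explicit $c_0=c_0(\alpha)$.

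Finally I would upgrade to arbitrary $t$ by the dilation $g\mapsto g_{\lambda}:=g(\lambda\,\cdot)$, $\lambda>0$: the three integrals scale as $\lambda^{\alpha-1}$, $\lambda$, $\lambda^{-1}$ respectively, so applying the $t=1$ inequality to $g_{\lambda}$ and multiplying by $\lambda^{1-\alpha}$ yields $\int|x|^{-\alpha}g^2\le\lambda^{2-\alpha}\|g'\|_2^2+c_0\lambda^{-\alpha}\|g\|_2^2$; the choice $\lambda=t^{1/(2-\alpha)}$ turns this into the claimed inequality with $c=c_0$ and $b=\frac{\alpha}{2-\alpha}$. The same computation gives the uniqueness of the exponent: if the inequality held with some $b'\neq\frac{\alpha}{2-\alpha}$, then after the substitution $g\mapsto g_{\lambda}$ and the renaming $s=t\lambda^{2-\alpha}$ the coefficient of $\|g\|_2^2$ becomes $c\,s^{-b'}\lambda^{\,b'(2-\alpha)-\alpha}$, which tends to $0$ as $\lambda\to0^{+}$ or as $\lambda\to+\infty$ according to the sign of $b'(2-\alpha)-\alpha$; letting $s\to0^{+}$ afterwards would force $\int|x|^{-\alpha}g^2\le0$, impossible for $g\not\equiv0$. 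To see that the exponent is forced even with $\|\nabla_{\cal L}f\|_2^2$ in place of $\|\partial_x f\|_2^2$, one tests with $f(x,y)=g(x)\phi(y/R)$ and lets $R\to\infty$, so that the extra term $\int\exp(-2|x|^{-\alpha})|\partial_y f|^2$ becomes negligible relative to the others and the one–dimensional inequality is recovered. No genuinely hard step is involved: the only point requiring care is the local estimate at $x=0$, valid precisely because $\alpha<1$, and everything else is dimensional analysis.
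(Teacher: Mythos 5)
Your proposal is correct, and although it shares the paper's overall skeleton --- reduce to one dimension via $\|\partial_x f\|_2^2\le\|\nabla_{\mathcal L}f\|_2^2$, prove a fixed-scale estimate, and rescale to produce $b=\frac{\alpha}{2-\alpha}$ --- both of your key steps are carried out differently from the paper's. For the fixed-scale estimate the paper splits at a parameter $\delta$, integrates $x^{-\alpha}f^2$ by parts on $(0,\delta)$, and needs a separate pointwise bound on $|f(\delta)|$, arriving at the two-term inequality $I_\alpha(f)\le \frac{3}{1-\alpha}\,\delta^{2-\alpha}\|f'\|_2^2+\frac{4-\alpha}{1-\alpha}\,\delta^{-\alpha}\|f\|_2^2$ before rescaling; you instead prove only the case $t=1$ using the interpolation bound $\|g\|_\infty^2\le 2\|g\|_2\|g'\|_2$ together with $\int_{|x|<1}|x|^{-\alpha}dx=\frac{2}{1-\alpha}$, which is shorter and isolates precisely where $\alpha<1$ enters. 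For the uniqueness of $b$ the paper applies the anisotropic dilation $H_\lambda(x,y)=(\lambda x,\lambda^{\beta}y)$ directly in two dimensions, at the cost of deforming $\mathcal L$ into a $\lambda$-dependent operator $\mathcal L_\lambda$ whose quadratic form must then be tracked as $\lambda\to0$ or $\lambda\to+\infty$; you first kill the $\partial_y$-contribution by testing with $f=g(x)\phi(y/R)$ and letting $R\to\infty$, and then run a purely one-dimensional scaling argument with $s=t\lambda^{2-\alpha}$, which avoids the operator-deformation issue entirely and is the cleaner of the two routes. Both arguments are sound and yield explicit constants; nothing in your write-up needs repair.
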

The remark after Lemma \ref{rosengr1} applies in a similar way
here for the gradient $\nabla_{\cal L}$.
Then by Theorem \ref{ultrarosen},  the two last inequalities imply, for any $t>0$,
$$
\int_{\R^2} f^2\ln \frac{ \vert f\vert \, }{\vert\vert f\vert\vert_2} \,dxdy
\leq
t
\vert\vert \nabla_{\cal L} f\vert\vert^2_2
+
\left(
-\frac{1}{2}
\ln\left(\pi e^2t\right)
+
\frac{c}{2} \, t^{-b}
\right)
\vert\vert  f\vert\vert^2_2.
$$
Now, following Example 2.3.4 of
\cite{d},
we conclude that for any $t>0$,
$$
\vert\vert T_t \vert\vert_{2\rightarrow \infty}
\leq 
 k' \,t^{-\frac{1}{2}} \,
\exp(c' \, t^{-b})
$$
which implies the uniform bound on the heat kernel  $h_t$ of ${\cal L}$,
for any $p=(x,y)\in \Ri^2$ and any $t>0$,
$$
h_t(p,p)
\leq
 k'' \, t^{-1}
\exp(c'' \,t^{-b}).
$$
Additional results related to this operator can be found in \cite{f-l}.
 
\end{enumerate}

\section{Appendix: Hardy type Lemmas}\label{appendix}

This appendix is devoted to the proof of the lemmas stated and used
in Section \ref{ultrabound}.
\subsection{Proof of Lemma \ref{rosengr1}}\label{sub:proof1} 
Since
\begin{equation*}
  \left\|
    \frac{\partial f}{\partial x}
  \right\|_{L^{2}(\mathbb{R}^{2})}\le 
  \|\nabla_{\cal L} f\|_{L^{2}(\mathbb{R}^{2})},
\end{equation*}
we see that it is sufficient to prove the
one dimensional estimate
\begin{equation}\label{eq:basiclog}
  \int_{\mathbb{R}} (-\ln \vert x\vert)f^2(x)\,dx
  \leq
  t
  \left\|f'\right\|_{2}^{2}
  +
 \frac{1}{2} \ln (2et^{-1})
  \vert\vert  f\vert\vert^2_2,
  \; t>0.
\end{equation}
where the norms are now in $L^{2}(\mathbb{R})$.
The proof of \eqref{eq:basiclog} will descend  from the
following proposition:

\begin{pro}\label{pro:}
  For all $0<\delta\le1$
  \begin{equation}\label{eq:hardy}
    \int_{0}^{1}- \ln x\cdot|f|^{2}dx\le
    |\ln \delta|\cdot\|f\|^{2}_{L^{2}(0,1)}
    +\|f\|^{2}_{L^{2}(0,\delta)}
    +\frac{2 \delta}{e}\|f\|_{L^{2}(0,\delta)}\|f'\|_{L^{2}(0,\delta)}.
  \end{equation}
\end{pro}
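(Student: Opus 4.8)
The plan is to split the integral over $(0,1)$ at the point $\delta$, dispose of the ``outer'' piece $\int_\delta^1$ by the trivial bound $-\ln x\le-\ln\delta=|\ln\delta|$ which holds on $[\delta,1]$, and reduce the ``inner'' piece $\int_0^\delta$, after peeling off the constant $|\ln\delta|$ from the weight, to a single integration by parts. Concretely, on the inner interval I would write $-\ln x=\bigl(-\ln(x/\delta)\bigr)+|\ln\delta|$, so that
\begin{equation*}
\int_0^\delta(-\ln x)|f|^2\,dx=|\ln\delta|\,\|f\|^2_{L^2(0,\delta)}+\int_0^\delta\bigl(-\ln(x/\delta)\bigr)|f|^2\,dx .
\end{equation*}
Since $\int_\delta^1(-\ln x)|f|^2\,dx\le|\ln\delta|\,\|f\|^2_{L^2(\delta,1)}$, the two leftover $|\ln\delta|$ terms combine into $|\ln\delta|\,\|f\|^2_{L^2(0,1)}$, and the whole statement \eqref{eq:hardy} is reduced to the scale‑invariant inequality
\begin{equation*}
\int_0^\delta\bigl(-\ln(x/\delta)\bigr)|f|^2\,dx\le \|f\|^2_{L^2(0,\delta)}+\frac{2\delta}{e}\,\|f\|_{L^2(0,\delta)}\|f'\|_{L^2(0,\delta)} .
\end{equation*}

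The heart of the matter is this last display. I would subtract one more unit from the logarithm, writing $-\ln(x/\delta)=\bigl(-\ln(x/\delta)-1\bigr)+1$: the point of the shift is that $-\ln(x/\delta)-1$ is precisely the derivative of $w(x):=-x\ln(x/\delta)$, and $w$ vanishes at \emph{both} endpoints $x=0$ and $x=\delta$. Integrating by parts therefore produces no boundary terms at all,
\begin{equation*}
\int_0^\delta\bigl(-\ln(x/\delta)-1\bigr)|f|^2\,dx=-\int_0^\delta w(x)\,(|f|^2)'\,dx=\int_0^\delta x\ln(x/\delta)\,(|f|^2)'\,dx ,
\end{equation*}
and then I would estimate the right-hand side using $|(|f|^2)'|\le 2|f||f'|$, the elementary maximization $\max_{0<x\le\delta}x\ln(\delta/x)=\delta/e$ (attained at $x=\delta/e$), and Cauchy--Schwarz, getting a bound of $\frac{2\delta}{e}\|f\|_{L^2(0,\delta)}\|f'\|_{L^2(0,\delta)}$. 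Adding back the $\|f\|^2_{L^2(0,\delta)}$ produced by the extra $+1$ gives the reduced inequality, hence \eqref{eq:hardy}.

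The only genuine obstacle is choosing the right antiderivative. Any antiderivative of $-\ln x$ itself is of the form $x(1-\ln x)+C$ and has oscillation of order $\delta(1+|\ln\delta|)$ on $(0,\delta)$, which is too large and reintroduces a $\ln\delta$; moreover the naive normalizations leave an unabsorbable boundary term $|f(0)|^2$ or $|f(\delta)|^2$. Shifting the weight by the constant $1$, so that the antiderivative becomes $-x\ln(x/\delta)$ vanishing at both ends of $(0,\delta)$, is exactly what simultaneously kills the boundary contributions and yields the sharp factor $\delta/e$ (this sharpness is what makes Lemma~\ref{rosengr1} and ultimately the constant $\tfrac12\ln(2e t^{-1})$ in \eqref{eq:basiclog} come out optimal). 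Everything else is routine: the splitting at $\delta$, the pointwise bound $|(|f|^2)'|\le 2|f||f'|$, and Cauchy--Schwarz. For regularity it suffices that $f$ be absolutely continuous on $[0,1]$ with $f'\in L^2(0,1)$, which holds in all the applications where $f\in C_0^\infty$; the vanishing of $w(x)|f(x)|^2$ at $x=0$ is clear since $w(x)\to0$ and $f$ is bounded near $0$.
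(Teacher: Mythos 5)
Your proof is correct, and it reaches the paper's inequality by a cleaner mechanism than the one actually used there. The paper first rewrites $\int_0^1(-\ln x)|f|^2\,dx$ as $\int_0^1\frac1x\bigl(\int_0^x|f|^2\,d\xi\bigr)dx$ via one integration by parts, splits at $\delta$, bounds the outer piece by $|\ln\delta|\,\|f\|^2_{L^2(0,1)}$, and on $(0,\delta)$ performs a second integration by parts in the inner variable followed by a Fubini exchange, arriving at the term $-2\int_0^\delta ff'\,\xi\ln(\delta/\xi)\,d\xi$ plus $\|f\|^2_{L^2(0,\delta)}$. You instead split the weight pointwise ($-\ln x\le|\ln\delta|$ on $[\delta,1]$, and $-\ln x=-\ln(x/\delta)+|\ln\delta|$ on $(0,\delta]$) and do a single integration by parts against the antiderivative $w(x)=-x\ln(x/\delta)$ of $-\ln(x/\delta)-1$, which vanishes at both endpoints; this lands you on exactly the same quantity $-2\int_0^\delta ff'\,x\ln(\delta/x)\,dx$ and the same extremal value $\delta/e$ of the weight at $x=\delta/e$, finished by Cauchy--Schwarz. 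The two arguments are therefore equivalent in substance --- same three terms, same sharp factor $2\delta/e$ --- but yours avoids the double integral and the Fubini step entirely, at the cost of having to guess the right antiderivative (the shift by $1$, which you motivate well); the paper's version instead makes the Hardy-averaging structure $\frac1x\int_0^x$ explicit. Your remarks on regularity (absolute continuity with $f'\in L^2$ suffices, and $w(x)|f(x)|^2\to0$ at $x=0$ since $f$ is bounded there) are accurate and match the paper's reduction to smooth $f$.
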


\begin{proof}
  It is sufficient to prove the estimate for a smooth function $f$.
  We have the identity
  \begin{equation*}
    \int_{0}^{1}(-\ln x)|f|^{2}dx
    =\int_{0}^{1}(-\ln x)\frac{d}{dx}\int_{0}^{x}|f|^{2}
    =\int_{0}^{1}\frac1x\int_{0}^{x}|f|^{2}+
    \left.(-\ln x)\int_{0}^{x}|f|^{2}\right|_{0}^{1}
  \end{equation*}
  and we notice that the last boundary term vanishes. We estimate the
  remaining integral at the r.h.s.~as follows. The piece of the
  integral with $\delta\le x\le1$ is bounded by
  \begin{equation}\label{eq:first}
    \int_{\delta}^{1}\frac1x\int_{0}^{x}|f|^{2}d\xi dx\le 
    \|f\|^{2}_{L^{2}(0,1)}\int_{\delta}^{1}\frac1x dx=
    \|f\|^{2}_{L^{2}(0,1)}\cdot|\ln \delta|.
  \end{equation}
  
   On the other hand, an integration by parts gives:
  \begin{equation}\label{eq:second}
  \begin{split}
    \int_{0}^{\delta}\frac1x \int_{0}^{x}|f|^{2}d\xi dx=&
    \int_{0}^{\delta}\frac1x \int_{0}^{x}\xi'|f|^{2}d\xi dx=
    \int_{0}^{\delta}\frac1x 
        \left[
          -\int_{0}^{x}2\xi f f' d\xi+
          \left.\xi |f|^{2}\right|_{0}^{x}
        \right]dx
    \\
    &
    =\int_{0}^{\delta}\int_{0}^{x}
         \left(-\frac{2\xi}{x}\right)ff'd\xi dx
      +\int_{0}^{\delta}|f|^{2}dx.
  \end{split}
  \end{equation}
  Now we notice that
  \begin{equation*}%
  \begin{split}
    \int_{0}^{\delta}\int_{0}^{x}
         \left(-\frac{2\xi}{x}\right)ff'd\xi dx=
      &
    \int_{0}^{\delta}\int_{\xi}^{\delta}
         \left(-\frac{2\xi}{x}\right)ff' dx d\xi=
    \\
      &
    = - 2\int_{0}^{\delta} f f'
        \left(\xi \ln\frac{\delta}{\xi}\right)d\xi .
  \end{split}
  \end{equation*}
  The function $\xi\ln(\delta/\xi)$ is non negative on $[0,\delta]$
  and vanishes at the boundary; its maximum is at $\xi=\delta/e$ so that
  \begin{equation*}
    \left|\xi \ln\frac{\delta}{\xi}\right|\le \frac{\delta}{e}.
  \end{equation*}
  This implies
  \begin{equation*}
  \vert 
   \int_{0}^{\delta}\int_{0}^{x}
         \left(-\frac{2\xi}{x}\right) f f' d\xi dx
         \vert
    \le \frac{2 \delta}{e}
    \|f\|_{L^{2}(0,\delta)}\|f'\|_{L^{2}(0,\delta)}
  \end{equation*}
  and by \eqref{eq:second}
  \begin{equation*}
    \int_{0}^{\delta}\frac1x \int_{0}^{x}|f|^{2}d\xi dx\le 
    \frac{2 \delta}{e}
    \|f\|_{L^{2}(0,\delta)}\|f'\|_{L^{2}(0,\delta)}
    +\|f\|^{2}_{L^{2}(0,\delta)}.
  \end{equation*}
  Putting this estimate together with \eqref{eq:first} we
  obtain \eqref{eq:hardy}.
\end{proof}

Now, we are in position to prove  Lemma \ref{rosengr1}.
By changing $f(x)$ by $f(-x)$, we get from (\ref{eq:hardy})
 \begin{equation}
    \int_{-1}^{0}- \ln \vert x\vert \cdot|f|^{2}dx\le
    |\ln \delta|\cdot\|f\|^{2}_{L^{2}(-1,0)}
    +\|f\|^{2}_{L^{2}(-\delta,0)}
    +\frac{2 \delta}{e}\|f\|_{L^{2}(-\delta,0)}\|f'\|_{L^{2}(-\delta,0
)}.
  \end{equation}

By the inequality $2ab\leq \frac{1}{s} a^2+sb^2$ valid for any $s>0$, we deduce
$$
2\|f\|_{L^{2}(-\delta,0)}\|f'\|_{L^{2}(-\delta,0
)}
+
2\|f\|_{L^{2}(0,\delta)}\|f'\|_{L^{2}(0,\delta)}
\leq
 \frac{1}{s} 
 \|f'\|^{2}_{L^{2}(-\delta,\delta)}
    +s\|f\|^{2}_{L^{2}(-\delta,\delta)}.
$$
By summing up with
(\ref{eq:hardy}), 
 for any $s>0$ and $\delta\in (0,1]$, we have
$$
   \int_{\mathbb R}- \ln \vert x\vert \cdot|f|^{2}dx\le
      \int_{-1}^{1}- \ln \vert x\vert \cdot|f|^{2}dx\le
      \frac{ \delta}{e s} \|f'\|^{2}_2
      +
   \left( |\ln \delta|+ 
\frac{ s\delta}{e}+1\right)
\|f\|^{2}_2.
$$
We have obtained an inequality of the form
$$
   \int_{\mathbb R}- \ln \vert x\vert \cdot|f|^{2}dx\le
c_1 \|f'\|^{2}_2
      +
  c_2
\|f\|^{2}_2
$$
with $c_i>0$. Here, we use a dilation argument by applying this inequality to
the rescaled function
\begin{equation*}
  f(x)=g(x \sqrt{t/c_1}\, ), \; t>0,
\end{equation*}
and we obtain
\begin{equation*}
    \int_{\mathbb R}- \ln \vert x\vert \cdot|f|^{2}dx\le
t\|f'\|^{2}_2
      +
\ln \left(\frac{
\sqrt{c_1}e^{ c_2}}{\sqrt{t}}\right)
\|f\|^{2}_2.
\end{equation*}
Taking  $c_1=  \frac{ \delta}{e s}$ and $c_2= |\ln \delta|+ 
\frac{ s \delta}{e}+1$, 
\begin{equation*}
c(s\delta):=\sqrt{c_1}e^{ c_2}
=
\sqrt{\frac{e}{s\delta} } e^{\frac{s\delta }{e}}.
\end{equation*}
We minimize $c(s\delta)$ over $s\delta>0$ and get 
$\inf_{s\delta>0} c(s\delta)=\inf_{u>0} \sqrt{\frac{1}{u}}e^u=\sqrt{2e}$,
which implies \eqref{eq:basiclog} as claimed.

\subsection{Proof of Lemma  \ref{rosengr2}}\label{sub:proof2} 
Let $0<\alpha<1,\delta>0$ and $f\in C_0^{\infty}(\Ri)$.
We write
$$
I_{\alpha}(f)=\int_0^{\infty}\frac{1}{\, \vert x\vert^{\alpha}} f^2(x)\,dx=J_{\alpha}+K_{\alpha}
$$
with $J_{\alpha}=\int_0^{\delta}\frac{1}{\vert x\vert^{\alpha}} f^2(x)\,dx$
and 
$K_{\alpha}=\int_{\delta}^{\infty}\frac{1}{\vert x\vert^{\alpha}} f^2(x)\,dx$.
Obviously,
$K_{\alpha} \leq {\delta}^{-\alpha}\,\vert\vert f\vert\vert_2^2$.
By integration by parts,
$$
J_{\alpha}=\left[ \frac{x^{1-\alpha}}{1-\alpha}f^2(x)\right]_0^{\delta}
-
\frac{2}{1-\alpha}\int_0^{\delta}{ x^{1-\alpha}}\, f\,f'\,dx
$$
$$
\leq
\frac{\delta^{1-\alpha}}{1-\alpha}f^2(\delta)+\frac{1}{1-\alpha} {\delta^{1-\alpha}}
\left( \frac{1}{\delta}\vert\vert f\vert\vert^2_2+{\delta}\vert\vert f'\vert\vert^2_2\right).
$$
This last inequality comes from:
$$
2\vert f f'\vert \leq  \frac{1}{\delta} f^2+{\delta}(f')^2.
$$
We now prove
\begin{equation}\label{holder}
\vert f(\delta)\vert\leq
\frac{1}{\sqrt{\delta}}\vert\vert f\vert\vert_2
+
\sqrt{\delta} \vert\vert f'\vert\vert_2.
\end{equation}
Let $x_0\in [0,\delta]$ such that
$\vert f(x_0)\vert =\inf_{ [0,\delta]}\vert f(x)\vert $. Then
$$
\vert f(\delta)\vert \leq \vert f(\delta)-f(x_0)\vert+\vert f(x_0)\vert
\leq
\int_0^{\delta}\vert f'\vert+ \frac{1}{\sqrt{\delta}} \vert\vert f\vert\vert_2
\leq
 {\sqrt{\delta}} \vert\vert f' \vert\vert_2
+
\frac{1}{\sqrt{\delta}} \vert\vert f\vert\vert_2
$$
by H\"older inequality. We deduce
$$
\vert f(\delta)\vert^2 \leq 
2{\delta} \vert\vert f' \vert\vert_2^2
+
\frac{2}{\delta}\vert\vert f\vert\vert_2^2.
$$
Therefore,
$$
J_{\alpha}\leq
\frac{3}{1-\alpha}\left( {\delta}^{2-\alpha}\vert\vert f' \vert\vert_2^2
+
{\delta}^{-\alpha}\vert\vert f \vert\vert_2^2
\right).
$$
From this bound and the bound on $K_{\alpha}$, we get
\begin{equation}\label{ialpha}
I_{\alpha}(f)\leq
\frac{3}{1-\alpha}\,{\delta}^{2-\alpha}\vert\vert f' \vert\vert_2^2
+\frac{4-\alpha}{1-\alpha}\,
{\delta}^{-\alpha}\vert\vert f \vert\vert_2^2.
\end{equation}
This inequality is stable by dilation.
Indeed, changing $f(x)$ by $f_{\lambda}(x)=f({\lambda}x)$, we obtain
$$
I_{\alpha}(f)\leq
\frac{3}{1-\alpha}\,({\delta {\lambda}})^{2-\alpha}\vert\vert f' \vert\vert_2^2
+\frac{4-\alpha}{1-\alpha}\,
({\delta {\lambda}})^{-\alpha}\vert\vert f \vert\vert_2^2.
$$
This reduces to (\ref{ialpha}) by setting $s={\delta {\lambda}}$.
\\

We set $c_1(\alpha)=\frac{3}{1-\alpha}$ and $c_2(\alpha) =\frac{4-\alpha}{1-\alpha}$. Let $t>0$ and choose $\delta$ such that
$t=c_1(\alpha){\delta}^{2-\alpha}$.
We set $\gamma=\frac{\alpha}{2-\alpha}$. The inequality  (\ref{ialpha}) is equivalent to
$$
I_{\alpha}(f)\leq
t\vert\vert f' \vert\vert_2^2
+c_3\, t^{-\gamma}
\vert\vert f \vert\vert_2^2.
$$
with $c_3=c_2c_1^{\gamma}$. The $L^2$-norm are the norm on $L^2(\Ri^+)$.
We easily deduce the result on $\Ri$,
$$
\int_{-\infty}^{\infty}\frac{1}{\vert x\vert^{\alpha}} f^2(x)\,dx
=
\int_0^{\infty}\frac{1}{\vert x\vert^{\alpha}} f^2(x)\,dx+\int_0^{\infty}\frac{1}{\vert x\vert^{\alpha}} f^2(-x)\,dx
\leq
t\vert\vert f' \vert\vert_2^2
+c_3 \, t^{-\gamma}
\vert\vert f \vert\vert_2^2,
$$
where now, the $L^2$-norm are the norm on $L^2(\Ri)$.
To finish the proof of Lemma \ref{rosengr2}, for $g\in C_0^{\infty}(\Ri^2)$ and any $y\in \Ri$, we set $f(x)=g(x,y)$ in the inequality just above
and integrate this inequality over $\Ri$ in $y$. We obtain
$$
\int_{\R^2} \frac{1}{\vert x\vert^{\alpha}} g^2(x,y)\,dxdy
\leq
t \, \vert\vert \frac{\partial g}{\partial x} \vert\vert_2^2
+c_3\, t^{-\gamma}
\vert\vert g \vert\vert_2^2.
$$
We conclude the lemma by the fact that
$$
  \left\|
    \frac{\partial g}{\partial x}
  \right\|^2_{L^{2}(\mathbb{R}^{2})}\le
({\cal L}g,g).
$$
We take $b=\gamma$ to prove our inequality.
\\

{\bf Proof of uniqueness of $b$}.
We use a dilation argument.
 Let $b'>0$ such that, for any $t>0$,
$$
\int_{\R^2} \frac{1}{\vert x\vert^{\alpha}} g^2(x,y)\,dxdy
\leq
t ({\cal L}g,g)  
+c_3 \, t^{-b'}
\vert\vert g \vert\vert_2^2.
$$
Replace now $g$ with $g \circ H_{\lambda}$
where $H_{\lambda}(x,y)=(\lambda x, \lambda^{\beta} y)$,  $\lambda>0$, for a fixed $\beta>1$;
after a change of variables,
 we get for any $t>0$ and $\lambda>0$:
\begin{equation}\label{predilate}
\int_{\R^2} \frac{1}{\vert x\vert^{\alpha}} g^2(x,y)\,dxdy
\leq
t{\lambda}^{2-\alpha}
({\cal L}_{\lambda}g,g)  
+c_3 \, t^{-b'}{\lambda}^{-\alpha}
\vert\vert g \vert\vert_2^2
\end{equation}
with 
$$
{\cal L}_{\lambda}={\cal L}_{\lambda,\beta}:=
-\left(\frac{\partial}{\partial{x}}\right)^2 - {\lambda}^{2\beta-2}
\exp\left(-\frac{2\lambda^{\alpha}}{\,\vert x\vert ^{\alpha}}\right)\left(\frac{\partial}{\partial{y}}\right)^2.
$$ 
Let $s>0,{\lambda}>0$ and choose $t>0$ in \eqref{predilate}  such  that $s=t{\lambda}^{2-\alpha}$,
then
$$
\int_{\R^2} \frac{1}{\vert x\vert^{\alpha}} g^2(x,y)\,dxdy
\leq
s
({\cal L}_{\lambda} g,g)  
+c_3 \, s^{-b'}{\lambda}^{-b'(\alpha-2)-\alpha}
\vert\vert g \vert\vert_2^2.
$$
Assume $b'>b$ and let $\lambda $ tend to $0$ and $s$ also (in that order), we get
$$
\int_{\R^2} \frac{1}{\vert x\vert^{\alpha}} g^2(x,y)\,dxdy=0
$$
for any function $g$: contradiction.
\\

Now, let $s>0,{\lambda}>0$ and choose $t>0$ in \eqref{predilate}  such that
$s=t^{-b'}{\lambda}^{-\alpha}$. 
Then
$$
\int_{\R^2} \frac{1}{\vert x\vert^{\alpha}} g^2(x,y)\,dxdy
\leq
s^{- \frac{1}{b'}}
 {\lambda}^{-\frac{\alpha}{b'}+2-\alpha}
({\cal L}_{\lambda}g,g)  
+c_3 \, s
\vert\vert g \vert\vert_2^2.
$$
Assume $b>b'$ and let  $\lambda$ tend to $+\infty$ and $s$ tend to $0$ (in that order), we get
the same contradiction. So $b'=b$.
The proof is completed.
\\

{\it Acknowledgements.
\\

Research partially supported by the ANR project "Harmonic Analysis at its boundaries".
ANR-12-BS01-0013-01.
\\

Patrick Maheux  benefited from two sabbatical leaves: one semester of D\'el\'egation from the CNRS (2011)  and  one semester of CRCT from the University of Orl\'eans (2012), France.}



\end{document}